\documentclass[psamsfonts]{amsart}  
\usepackage{amssymb}

\textheight 205 true mm \textwidth  150 true mm \oddsidemargin
2.5true mm \evensidemargin 2.5 true mm

\usepackage[dvips]{graphicx}

\markboth{left head}{right head}

\usepackage{graphicx}
\usepackage{amssymb}                       
\usepackage{amsmath}
\usepackage{color}
\usepackage{times}

\usepackage[unicode,bookmarks,colorlinks]{hyperref}
\hypersetup{
   linkcolor=brickred,
}

\definecolor{mahogany}{cmyk}{0, 0.77, 0.87, 0}
\definecolor{salmon}{cmyk}{0, 0.53, 0.38, 0}
\definecolor{melon}{cmyk}{0, 0.46, 0.50, 0}
\definecolor{yellowgreen}{cmyk}{0.44, 0, 0.74, 0}
\definecolor{brickred}{cmyk}{0, 0.89, 0.94, 0.28}
\definecolor{OliveGreen}{cmyk}{0.64, 0, 0.95, 0.40}
\definecolor{RawSienna}{cmyk}{0, 0.72, 1.0, 0.45}
\definecolor{ZurichRed}{rgb}{1, 0, 0} 

\usepackage{fancyhdr}
\pagestyle{fancy}

\usepackage{amsmath,amstext,amssymb,amsopn,amsthm}
\usepackage{amsmath,amssymb,amsthm}
\usepackage[mathscr]{eucal}

\pagestyle{headings}

\DeclareMathOperator{\sgn}{sgn}

\newcommand{\Rd}{{\mathbb{R}^d}}
\newcommand{\RR}{{\mathbb{R}}}

\newcommand{\R}{\mathbb{R}}
\newcommand{\ind}{\mathbf{1}}

\newcommand{\E}{\mathbb{E}}
\newcommand{\Ex}{{\mathbb{E}_x}}

\newcommand{\px}{{\mathbb{P}_x}}
\newcommand{\p}{{\mathbb{P}}}

\newcommand{\CD}{\mathcal{D}}
\newcommand{\cc}{\mathcal{C}}

\newtheorem{theorem}{Theorem}[section]

\numberwithin{equation}{section}

\newtheorem{lemma}[theorem]{Lemma} 

\newtheorem{corollary}[theorem]{Corollary}

\newtheorem{example}{Example}

\reversemarginpar
\definecolor{kb}{rgb}{0.1,0.5,0.1}
\definecolor{rb}{rgb}{0.1,0.2, 0.7}
\definecolor{tl}{rgb}{0.7,0.1,0.2}

\begin{document}

\title{Hardy-Stein identities and square functions for semigroups}
\subjclass[2010]{Primary 
42B25;
Secondary 
60J75,
42B15.
}
\keywords{square function, Littlewood-Paley theory, Hardy-Stein identity, nonlocal operator, Fourier multiplier}

\author{Rodrigo Ba\~nuelos}
\address{Department of Mathematics, Purdue University\\
150 N. University Street, West Lafayette, IN 47907-2067, USA}
\email{banuelos@math.purdue.edu}

\author{Krzysztof Bogdan}
\address{Department of Mathematics, Wroc{\l}aw University of Technology, Wyb. Wyspia\'nskiego 27, 50-370 Wroc\l{}aw, Poland} 
\email{Krzysztof.Bogdan@pwr.edu.pl}
\thanks{R. Ba\~nuelos was supported in part  by NSF grant
\#1403417-DMS, K. Bogdan was supported in part by NCN grant \#2012/07/B/ST1/03356, T. Luks was supported in part by Agence Nationale de la Recherche grant ANR-09-BLAN-0084-01.}

\author{Tomasz Luks}
\address{Ecole Centrale de Marseille, I2M\\
38 Rue Fr\'ed\'eric Joliot Curie, 13013 Marseille, France
}
\email{tomasz.luks@centrale-marseille.fr}
\date{\today}
\maketitle
\begin{abstract}
We prove a Hardy-Stein type identity for the semigroups of symmetric, pure-jump L\'evy processes. 
Combined  with the Burkholder-Gundy inequalities, it gives the $L^p$ two-way boundedness, for $1<p<\infty$, of the corresponding Littlewood-Paley square function. The square function yields a direct proof of the $L^p$ boundedness of  Fourier multipliers obtained by transforms of martingales of L\'evy processes.
\end{abstract}

\section{Introduction}\label{sec:intro}
Littlewood and Paley introduced the square functions 
to harmonic analysis in \cite{MR1574750}.
Many applications and intrinsic beauty of the subject brought about 
enormous 
literature, which would be impossible to review here in a reasonably complete way.   
For results on classical square functions
we refer the reader to Zygmund \cite{Zyg} and Stein \cite{MR0290095}, \cite{MR0252961}. In particular, \cite{MR0290095} uses harmonic functions on the upper half-space and the related Gaussian and Poisson semigroups to develop  Littlewood-Paley theory for the $L^p$ spaces.   In \cite{MR0252961} Stein employs more general symmetric semigroups in a similar manner. He 
uses square functions defined in terms of the generalized Poisson semigroup, that is the original semigroup subordinated in the sense of Bochner by the $1/2$-stable subordinator \cite{MR2978140}.
He also proposes  square functions defined in terms of time derivatives of the original semigroup.
Similarly, Meyer \cite{Mey1} employs the generalized Poisson semigroup,
and Varopoulos in \cite{MR583240} uses time derivatives of the original semigroup.

It may be helpful to note that Littlewood-Paley theory and square functions (including the Lusin area integral) are 
auxiliary for studying $L^p$ and other function spaces, Fourier multipliers theorems, partial differential equations and boundary behavior of functions.  
This explains, in part, the large variety of square functions used in literature toward different goals. At the same time the multipliers and PDEs manageable by a square function depend on the semigroup employed in its definition, which motivates the study of square functions specifically related to a given semigroup. We also note that  square functions usually combine the {\it carr\'e du champ} corresponding to the semigroup \cite{Mey1} and integration against the semigroup or its Poisson subordination.

It is well-known that the probabilistic counterpart of square functions is the quadratic variation of 
the 
martingales. Similarly, the Littlewood-Paley inequalities for square functions may be considered analytic analogues of
the Burkholder-Davis-Gundy inequalities, which relate the $L^p$ integrability of the martingale and its maximal function to the $L^p$ integrability of its quadratic variation.
The probabilistic connections to Littlewood-Paley theory have been explored by countless  authors
for many years.  For a highly incomplete list of results, we refer the reader to  Stein \cite{MR0252961},  Meyer  \cite{Mey1}, \cite{Mey2}, \cite{Mey3}, Varopoulos \cite{MR583240}, Ba\~nuelos \cite{MR855179}, Ba\~nuelos and Moore \cite{MR1707297}, Bennett \cite{MR797052},  Bouleau and Lamberton \cite{BouLam}, Karli \cite{Kar1}, Kim and Kim \cite{MR2869738},  Krylov \cite{MR1317805}, and the many references given in these papers.  

In the analytic, as opposed to probabilistic, realm the $L^p$ boundedness of the classical Littlewood-Paley square functions
can be obtained from the Calder\'on-Zygmund theory of singular integrals, as done in Stein  
\cite[pp. 82-84]{MR0290095}. The singular integral approach can also be used for a wide range of Littlewood-Paley square functions constructed from  volume preserving dilations of approximations to the identity.  For this (well-known) approach, we refer the reader to \cite{MR1707297}.   An alternative beautiful way to prove $L^p$ boundedness in the classical case for $1<p< 2$ is via the so called Hardy-Stein identities.  This approach is employed in Stein \cite[pp. 86-88]{MR0290095} and, outside of some standard maximal function estimates that hold in very general settings when the Hardy-Littlewood maximal function is replaced by the semigroup maximal function, it is based on the fact that the Laplacian satisfies a special case of what in diffusion theory is often called the chain rule.  That is, $\Delta u^p ={p(p-1)}u^{p-2}|\nabla u|^2+pu^{p-1}\Delta u$ for $1<p<\infty$ and suitable functions $u$;  see \cite[Lemma 1, p. 86]{MR0290095}.  Stein's proof can be easily adapted to Markovian semigroups whose generators satisfy the chain rule as  discussed in \cite{MR2213477}, Formula (10).   
It is also explained in \cite{MR2213477} 
that such  chain rule requires the process to have continuous trajectories, 
thus ruling out the nonlocal operators.

The purpose of the present paper is to prove the two-way $L^p$ bounds for square functions of Markovian semigroups generated by nonlocal operators.
Indeed, we define an intrinsic square function $\tilde G(f)$ for such semigroups and prove the upper and lower boundedness in $L^p$.
The square function thus characterizes the $L^p$ spaces for 
$1<p<\infty$. 
We like to note a certain asymmetry in the definition of $\tilde G(f)$ and the fact that the more natural and symmetric square
function $G(f)$ fails to be bounded in $L^p$ for $1<p<2$. 

Our technique is based on new Hardy-Stein identities for the considered semigroups (which replace the chain rule for $1<p\leq 2$)  and on  Burkholder-Gundy inequalities for suitable martingales driven by the stochastic processes corresponding to those semigroups ({these are important} for $2\leq p<\infty$). Once the upper bound inequalities are obtained, the lower bound inequalities 
may be proved  by polarization and duality. 
Our Hardy-Stein identities are inspired by those given in \cite{MR3251822} for harmonic and conditionally harmonic functions of the Laplacian and the fractional Laplacian, but the present setting 
is distinctively different.

The paper may be considered as a streamlined approach from semigroups to Hardy-Stain identities to square functions to multiplier theorems.
To avoid certain 
technical problems our present results are restricted to
the (convolution) semigroups of symmetric, pure-jump L\'evy processes satisfying the Hartman-Wintner condition. The results should hold in much more general setting, but the scope of the extension is unclear at this moment.
As mentioned, we give applications to the $L^p$-boundedness of Fourier multipliers. 
Namely, we recover the results of \cite{MR3263924}, \cite{MR2918086}, \cite{MR2345912}, where  Fourier multipliers were constructed by tampering with jumps of L\'evy processes with symmetric L\'evy measure.  
Our present approach to Fourier multipliers is simpler than in those papers because
we do not use  Burkholder's inequalities for martingale transforms.
While the approach does not yield sharp constants in $L^p$ comparisons, it should be of interest in applications to multipliers which do not necessarily arise from martingale transforms.   

We note in passing that the approach to Fourier multiplers via square functions has been used in various settings to prove bounds for operators that arise from martingale transforms, such as Riesz transforms and other singular integrals.  For some  recent application of this idea, see \cite[Lemma 1]{MR3306689} and 
\cite[proof of Theorem 1.1]{2015arXiv1506.01208}, where different Littlewood-Paley square functions are employed to prove $L^p$--boundedness for operators arising from martingale transforms.   
We also note that the constants in our $L^p$ estimates of the square functions and Fourier multipliers depend only on $p\in (1,\infty)$ and in particular they do not depend on the dimension of $\Rd$.
It is interesting to note that our applications, unlike those presented in Stein \cite{MR0290095} for his proof of the H\"ormander multiplier theorem, do not depend on pointwise comparisons of Littlewood-Paley square functions before and after applying the multiplier.  Instead, it suffices to have an integral control of the quantities involved, because we can use
the isometry property of the square function on $L^2$ and the usual
pairing to define and study the multiplier. 
In particular, {in applications} we only use two square functions $\tilde G(f)$ and $G(f)$, rather than a whole family of square functions.

The structure of the paper is as follows.  In \S2 we introduce the considered semigroups
and we recall their basic properties.
In \S3 we prove the Hardy-Stein identities. In \S4 we define the square functions and give their upper and lower bounds in $L^p$. In \S5 we present applications to Fourier multipliers.

\section{Preliminaries}

We use ``$:=$" 
to emphasize definitions, e.g., 
$a \wedge b := \min \{ a, b\}$ and $a \vee b := \max \{ a, b\}$. 
For two nonnegative functions $f$ and $g$ on the same domain we write
$f\approx g$ if
there is a positive number $c\geq 1$
such that $c^{-1}\, g \leq f \leq c\, g$ (uniformly for all arguments involved).
All the sets  and functions considered in this work are assumed  real-valued and Borel measurable, unless stated otherwise. 

We 
consider 
the Euclidean space $\Rd$ with dimension $d\geq1$ and the $d$-dimensional Lebesgue measure $dx$. The Euclidean scalar product and norm on $\Rd$ are denoted by $x\cdot y$ and $|x|$.
For every $p\in [1,\infty)$ we let $L^p:=L^p({\Rd},dx)$ be the collection of all the (real-valued Borel-measurable) functions $f$ on $\Rd$ with finite norm
$$
\|f\|_p:=\left[\int_{\Rd} |f(x)|^p dx\right]^{1/p}.
$$
{As usual, $\|f\|_\infty$ denotes the essential supremum of $|f|$.}
For $p=2$ we use the usual scalar product on $L^2$,
$$
\langle f, g\rangle:=\int_{\Rd} f(x)g(x)dx.
$$ 
Let $\nu$ be a measure on $\R^d$ such that $\nu(\left\{0\right\})=0$ and
\begin{equation}\label{e:Lm}\tag{LM}
\int_{\Rd}(1\wedge|y|^2)\nu(dy)<\infty.
\end{equation}
In short: $\nu$ is a L\'evy measure.
We assume that $\nu$ is symmetric: for all (Borel) sets $B\subset \Rd$,
\begin{equation}\label{e:sLm}\tag{S}
\nu(B)=\nu(-B).
\end{equation}
For later convenience we note that given of nonnegativity or absolute integrability of function $k$,
\begin{equation}\label{eq:xxy}
\int \int k(x,y)\nu(dy)dx=
\int \int k(x,-y)\nu(dy)dx=
\int \int k(x+y,-y)\nu(dy)dx.
\end{equation}
Here we used the symmetry of $\nu$, Fubini's theorem and the translation invariance of the Lebesgue measure. In effect the variables in \eqref{eq:xxy} are  changed according to $(x,y,x+y)\mapsto (x+y,-y,x)$.  As a consequence,
\begin{eqnarray}
&&\int \int \ind_{|k(x)|>|k(x+y)|}|k(x+y)-k(x)|\ |h(x+y)-h(x)|\nu(dy)dx \nonumber\\
&=&\frac12 \int \int |k(x)-k(x+y)|\ |h(x)-h(x+y)|\nu(dy)dx,\label{eq:halfp}
\end{eqnarray}
where $k,h$ are arbitrary.
We define
\begin{equation}\label{Lexponent} 
\psi(\xi)=\int_\Rd \left(1-\cos(\xi\cdot x)\right)\nu(dx),\quad \xi\in \Rd, 
\end{equation}
Clearly, $\psi(-\xi)=\psi(\xi)$ for all $\xi$. Finally, we shall assume 
the following Hartman-Wintner condition on $\nu$:
\begin{equation}\label{e:HaWi}\tag{HW}
\lim_{|\xi|\to \infty}
\frac{\psi(\xi)}{\log|\xi|}=\infty. 
\end{equation}
Below we work precisely under these three assumptions \eqref{e:Lm}, \eqref{e:sLm} \eqref{e:HaWi}, except in specialized examples.\\
We let
\begin{equation}\label{e:Fi}
p_t(x)=(2\pi)^{-d}\int_\Rd e^{-i\xi\cdot x}e^{-t\psi(\xi)}d\xi,\quad t>0, \ x\in \Rd.
\end{equation}
Clearly, $p_t(-x)=p_t(x)$ for all $x$ and $t$, and $p_t(x)\le p_t(0)\to 0$ as $t\to \infty$.
By the characterization of the infinitely divisible distributions, i.e. the L\'evy-Khintchine formula, $p_t$ is a density function of a probability measure on $\Rd$ (see \cite{MR2569321} for a direct construction),
$$
\int_\Rd p_t(x)dx=1.
$$
The Fourier transform of $p_t$ is
\begin{equation}\label{e:dFt}
\hat{p_t}(\xi):=\int_\Rd e^{i\xi\cdot x}p_t(x)dx=e^{-t\psi(\xi)}, \quad \xi\in \Rd,\ t>0.
\end{equation}
By \eqref{e:Fi} and \eqref{e:HaWi}, $p_t(x)$ is smooth in $x$ and $t$. 
By \eqref{e:dFt}, $p_t$ form a convolution semigroup of functions:
$$
p_t*p_s=p_{t+s}.
$$
For notational convenience we let
$$
p_t(x,y)=p_t(y-x), \qquad x,y\in \Rd,\ t>0.
$$
From the above discussion we have  the following symmetry property
\begin{equation}\label{eq:psymmetric}
p_t(x,y)=p_t(y,x)\,,\quad x,y\in \Rd,\ t>0,
\end{equation}
the Chapman--Kolmogorov equations
\begin{equation}
  \label{eq:ck}
\int_{\Rd} p_s(x,y)p_t(y,z)dy=p_{s+t}(x,z),\qquad x,z\in \Rd,\; s,t> 0   
\end{equation}
and the  Markovian property 
\begin{equation}\label{eq:1}
\int_{\Rd} p_t(x,y)dy=\int_{\Rd} p_t(x,y)dx=1.
\end{equation}
In fact, $p_t$ is a transition probability density of a symmetric, pure jump L\'evy process $\left\{X_t, t\geq0\right\}$ with values in $\RR^d$ and the characteristic function given by
$$
\E\left[ e^{i\xi\cdot X_t}\right]= e^{-t\psi(\xi)}, \quad 
 t\geq0.
$$
The function $\psi$ is called the {\it characteristic} or {\it L\'evy-Khintchine exponent} of $X_t$.
For an initial state $x\in\Rd$, a Borel set $A\subset\Rd$ and a function $f$ on $\Rd$ we let
$$
\px(X_t\in A):=\p(X_t+x\in A), \quad \quad \Ex f(X_t):=\E f(X_t+x).
$$
It is well-known that
$$
P_t f(x):=\Ex f(X_t)=\int_{\Rd} p_t(x,y)f(y)dy
$$
defines a Feller semigroup on $C_0(\Rd)$, the space of continuous functions on $\Rd$ vanishing at infinity. That is, $P_t C_0(\Rd)\subset C_0(\Rd)$ for all $t>0$, and $(P_t)$ is strongly continuous: $\|P_t f-f\|_{\infty}\to 0$ as $t\to0$ for all $f\in C_0(\Rd)$. We let $L$ be the corresponding infinitesimal generator of $(P_t)$: 
$$
Lf:= \lim_{t\searrow 0}\frac{P_tf-f}{t}.
$$
Here the limit is taken in the supremum norm. Let $\CD(L)$ be the domain of $L$. Then $C^2_0(\Rd)\subset\CD(L)$, where 
$$
C_0^2(\Rd):=\left\{f\in C^2(\Rd)\cap C_0(\Rd):  \frac{\partial f}{\partial x_i}, \frac{\partial^2 f}{\partial x_i\partial x_j}\in C_0(\Rd)
,\quad \ 1\leq i,j\leq d\right\} .
$$
We similarly define the spaces $C_0^k(\Rd)$, $k=1,2,3,\ldots$, and their intersection $C_0^\infty(\Rd)$.
By \cite[Theorem 31.5]{MR3185174} and the symmetry of $\nu$, the generator $L$ satisfies
\begin{equation}\label{PtGen}
Lf(x)=\lim_{\varepsilon\searrow 0}\int_{|y|>\varepsilon}\left(f(x+y)-f(x)\right)\nu(dy),\quad f\in C_0^2(\Rd),\ x\in\Rd.
\end{equation}
By Jensen's inequality and Fubini-Tonelli, $(P_t)$ is also a semigroup of contractions on $L^p$ for every $1\leq p< \infty$, that is, $\|P_tf\|_p\leq \|f\|_p$.
Furthermore, $(P_t)$ is strongly continuous on $L^p$ for every $1\leq p<\infty$. By \cite[Theorem~2.1]{MR3010850} we have 
$p_t(x,\cdot)\in C^{\infty}_0(\Rd)\cap L^1(\Rd)$ for all $x\in\Rd$ and $t>0$. 
In fact it follows from \cite[the proof of Theorem~2.1]{MR3010850} that for fixed $t>0$ and $x\in\Rd$, $p_t(x)=\varphi*\tilde p(x)$, where $\varphi$ is a function in the Schwarz class $\mathcal{S}(\Rd)$, and $\tilde p$ is a probability {measure}. 
Hence, if $f\in L^p(\Rd)$ for some $1\leq p<\infty$, then $P_tf\in L^p(\Rd)\cap C_0^{\infty}(\Rd)$. 
Since $p_t(x,y)\leq p_t(0)$, we have
that $P_t:L^2(\Rd)\to L^{\infty}(\Rd)$ is bounded for all $t>0$. This  property is called ultracontractivity.  For more on this topic, see Davies \cite{MR990239}.

\begin{example}
\rm
The above  assumptions are satisfied 
for  the semigroup 
of many L\'evy processes and  in particular
for 
the semigroup of the isotropic symmetric stable L\'evy processes, associated with the fractional Laplacian.  Indeed, as is 
well-known, the transition density of these processes for $0<\alpha<2$, can be written as 
\begin{equation}\label{subordination}
p_t^{(\alpha)}(x, y)=p_t^{(\alpha)}(x-y)=\int_0^{\infty}\frac{1}{(4\pi s)^{d/2}}e^{\frac{-|x-y|^2}{4s}} \eta^{\alpha/2}_t(s)\,ds,
\end{equation}
where $\eta^{\alpha/2}_t(s)$ is the density for the $\alpha/2$-stable subordinator \cite{MR2569321}.  From this it follows that for each $t>0$, $p_t^{(\alpha)}(x)$ is a radially decreasing function of $x$, and 
$$p_t^{(\alpha)}(x)\leq p_t^{(\alpha)}(0)=\frac{p_1(0)}{t^{\alpha/d}}<\infty.$$
In particular the corresponding semigroup is ultracontractive.  Its L\'evy measure is 
$$\nu(dy)=\mathcal{A}_{d,-\alpha}|y|^{-d-\alpha}dy, \,\,\,\, y\in \Rd,
$$
where
\begin{equation}\label{eq:stableconstant}
\mathcal{A}_{d,-\alpha}=
2^{\alpha}\Gamma\big((d+\alpha)/2\big)\pi^{-d/2}/|\Gamma(-\alpha/2)|.
\end{equation} 
Our assumptions also hold for many other semigroups obtained by subordination of the Brownian motion \cite{MR2978140} and for the more general unimodal L\'evy processes \cite{MR3339224}, provided they satisfy the so-called weak lower scaling condition \cite{MR3339224}. 
\hfill \qed
\end{example}
We shall need 
the following fundamental inequality of Stein \cite{MR0131517} which holds for symmetric Markovian semigroups.  

\begin{lemma}\label{stein}  For $f\in L^p$, $1<p\leq \infty$,
define the maximal function $f^*(x)=\sup_{t}|P_tf(x)|$.
Then,
\begin{equation}\label{maxineq}
\|f^{*}\|_p\leq \frac{p}{p-1}\|f\|_p,
\end{equation}
where the right hand side is just $\|f\|_{\infty}$, 
if $p=\infty$.  
\end{lemma}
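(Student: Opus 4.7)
The approach I would take follows Stein's original strategy: dominate the semigroup maximal function by a martingale maximal function and then invoke Doob's $L^p$ inequality, which accounts precisely for the constant $p/(p-1)$.

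First, the endpoint $p = \infty$ is immediate, since each $P_t$ is a contraction on $L^\infty$: by \eqref{eq:1} and positivity of $p_t(x,y)$, $|P_t f(x)| \le \|f\|_\infty$ for every $t$ and $x$. For $1 < p < \infty$ I would begin by reducing the supremum to a countable one. Strong $L^p$-continuity of $(P_t)$, combined with continuity of $t \mapsto P_t f(x)$ for $f \in C_0^\infty(\Rd)$ (a dense class), lets one replace $f^*$ by $\sup_{t \in \mathbb{Q}_+} |P_t f|$, bypassing any measurability issue with uncountable suprema.

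The central step uses that $(P_t)$ is symmetric on $L^2$ by \eqref{eq:psymmetric}, Markovian by \eqref{eq:1}, and positivity-preserving. Under these hypotheses, Rota's dilation theorem (see Stein \cite{MR0252961}) produces, on an auxiliary probability space, a decreasing family of $\sigma$-algebras $(\mathcal{F}_t)$ and an $L^p$-isometric lifting $f \mapsto \tilde f$ realizing $P_{2t} f$ as a reverse-martingale conditional expectation $\E[\tilde f \mid \mathcal{F}_t]$. Doob's $L^p$-maximal inequality for reverse martingales, whose sharp constant is $p/(p-1)$, then yields
\[
\Big\| \sup_{t>0} |P_{2t} f| \Big\|_p \le \frac{p}{p-1}\,\|\tilde f\|_p = \frac{p}{p-1}\|f\|_p,
\]
and the substitution $t \mapsto t/2$ gives the claimed bound.

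The main technical obstacle is the passage from the dyadic-time dilation naturally furnished by Rota's construction to the full continuous-parameter supremum $\sup_{t>0}$. I would handle this by first establishing the inequality for $t$ in a countable dense subset (dyadic rationals) and then using strong $L^p$-continuity of $(P_t)$, together with density of $C_0^\infty(\Rd)$, to upgrade to all $t > 0$. An alternative route, avoiding Rota, goes through analyticity of $(P_t)$ on $L^p$ and the Hopf-Dunford-Schwartz ergodic maximal theorem applied to the averages $t^{-1}\int_0^t P_s f\,ds$, but this sacrifices the sharp constant $p/(p-1)$.
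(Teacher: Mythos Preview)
The paper does not supply its own proof of this lemma; it simply cites Stein \cite{MR0252961} and Kim \cite{2015arXiv1506.01208}, remarking that the constant $p/(p-1)$ is exactly Doob's constant for the martingale maximal inequality. Your Rota-dilation-plus-Doob sketch is precisely the argument from \cite[Chapter~4]{MR0252961} that the authors invoke, so your approach coincides with the one the paper has in mind.
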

We note that Stein \cite{MR0131517} gives 
an unspecified constant 
depending only on $p$ for this inequality. For our applications here this is sufficient, however it is well-known that the inequality actually holds with the explicit constant given above.  In fact, this is nothing more than the constant in Doob's inequality for martingales. The latter is the tool  used in \cite[Chapter 4]{MR0252961} for the proof of the inequality.  For a shorter argument using continuous time martingales and Doob's inequality, we refer the reader to Kim \cite[Proposition 2.3]{2015arXiv1506.01208}.   Kim's proof is the 
zero-potential case of the proof given in Shigekawa \cite{MR1951521} for Feynman--Kac semigroups.  This proof (the zero-potential case of Shigekawa) has been  
known to experts for many years.

\section{Hardy-Stein identity}

The following elementary results are given in \cite{MR3251822}.
Let $1<p<\infty$.
For $a, b\in \RR$ we set
\begin{equation}\label{eq:defF}
 F(a,b) = |b|^p-|a|^p - pa|a|^{p-2}(b-a)\,.
\end{equation}
Here $F(a,b)=|b|^p$ if $a=0$, and $F(a,b)=(p-1)|a|^p$ if $b=0$.
For instance, if $p=2$, then $F(a,b)=(b-a)^2$.
Generally, $F(a,b)$ is the second-order Taylor remainder of $\RR\ni x\mapsto |x|^p$, therefore by convexity, $F(a,b)\geq 0$.
Furthermore, for $1<p<\infty$ and 
$\varepsilon\in \RR$ 
we define
\begin{equation}\label{eq:Feps}
F_\varepsilon(a,b) = 
(b^2+\varepsilon^2)^{p/2}-(a^2+\varepsilon^2)^{p/2}
  - pa(a^2+\varepsilon^2)^{(p-2)/2}(b-a)\,.
\end{equation}
Since $F_\varepsilon (a,b)$ is the second-order Taylor remainder of $\RR\ni x\mapsto (x^2+\varepsilon^2)^{p/2}$, by convexity, $F_\varepsilon(a,b)\geq 0$.
Of course, $F_\varepsilon(a,b)\to F_0(a,b)=F(a,b)$ as $\varepsilon\to 0$.
\begin{lemma}[\cite{MR3251822}]\label{lem:F}
For every $p>1$, we have constants $0<c_p\le C_p<\infty$ such that
\begin{equation}\label{elem-ineq}
c_p (b-a)^2(|b|\vee |a|)^{p-2}\le F(a,b) \le
C_p (b-a)^2(|b|\vee |a|)^{p-2},\qquad
a, b\in\R.
\end{equation} 
If $p\in (1,2)$, then 
\begin{equation}
  \label{eq:ub}
0\le F_\varepsilon(a,b)\leq \frac{1}{p-1}F(a,b)\,,\qquad \varepsilon, a, b\in\R\,.
\end{equation}
\end{lemma}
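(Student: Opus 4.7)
My plan is to prove the two estimates separately, using a homogeneity--compactness argument for \eqref{elem-ineq} and an integral Taylor remainder for \eqref{eq:ub}.

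For the two-sided estimate \eqref{elem-ineq}, the key observation is that both $F(a,b)$ and $G(a,b):=(b-a)^2(|a|\vee|b|)^{p-2}$ are continuous on $\R^2\setminus\{0\}$ and positively homogeneous of degree $p$, so their quotient $R(a,b):=F(a,b)/G(a,b)$ is homogeneous of degree zero. It therefore suffices to bound $R$ on the compact set $K:=\{(a,b)\in\R^2:|a|\vee|b|=1\}=\partial([-1,1]^2)$. On $K\setminus\{(1,1),(-1,-1)\}$, $R$ is continuous and strictly positive, since $F(a,b)>0$ for $a\ne b$ by strict convexity of $x\mapsto|x|^p$. At the two exceptional diagonal points of $K$, $R$ has the indeterminate form $0/0$; approaching $(1,1)$ along $K$ as $(1,1-h)$ or $(1-h,1)$ with $h\downarrow 0$, second-order Taylor expansions of $(1-h)^p$ and $(1-h)^{p-1}$ both give $F(a,b)=\tfrac{p(p-1)}{2}h^2+O(h^3)$ and $G(a,b)=h^2(1+O(h))$. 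Thus $R$ extends continuously at $(\pm1,\pm1)$ with value $p(p-1)/2$, and the extreme value theorem delivers a strictly positive minimum $c_p$ and finite maximum $C_p$ for $R$ on $K$.

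For \eqref{eq:ub}, I would invoke the integral form of the Taylor remainder applied to $\phi(x):=|x|^p$ and $\phi_\varepsilon(x):=(x^2+\varepsilon^2)^{p/2}$, namely
\[
F(a,b)=(b-a)^2\int_0^1(1-s)\,\phi''(a+s(b-a))\,ds
\]
and the analogous formula for $F_\varepsilon$ with $\phi_\varepsilon''$ in place of $\phi''$. These simultaneously reconfirm $F,F_\varepsilon\ge 0$ and reduce \eqref{eq:ub} to the pointwise inequality $(p-1)\phi_\varepsilon''(x)\le\phi''(x)$. A direct computation gives $\phi''(x)=p(p-1)|x|^{p-2}$ and $\phi_\varepsilon''(x)=p(x^2+\varepsilon^2)^{(p-4)/2}[(p-1)x^2+\varepsilon^2]$, so the task becomes
\[
(x^2+\varepsilon^2)^{(p-4)/2}\bigl[(p-1)x^2+\varepsilon^2\bigr]\le |x|^{p-2}.
\]
This is obtained by chaining two elementary inequalities specific to $p\in(1,2)$: first, $0<p-1<1$ gives $(p-1)x^2+\varepsilon^2\le x^2+\varepsilon^2$, bounding the left-hand side by $(x^2+\varepsilon^2)^{(p-2)/2}$; second, $(p-2)/2<0$ together with $x^2+\varepsilon^2\ge x^2$ gives $(x^2+\varepsilon^2)^{(p-2)/2}\le|x|^{p-2}$ (trivially at $x=0$, reading $|x|^{p-2}=+\infty$).

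The main delicate point in \eqref{elem-ineq} is the continuous extension of $R$ across the two degenerate diagonal points of $K$, which I handle via the Taylor expansion above; the lower constant $c_p$ would otherwise be in doubt. For \eqref{eq:ub} there is no serious obstacle beyond seeing the right two-step chain of elementary bounds, but recognizing that the sign of $(p-2)/2$ can be exploited to pass from $(x^2+\varepsilon^2)^{(p-2)/2}$ to $|x|^{p-2}$ is the only genuinely non-mechanical idea in the argument.
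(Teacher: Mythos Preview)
The paper does not give a proof of this lemma; it is quoted verbatim from \cite{MR3251822} as an ``elementary result,'' so there is nothing in the paper to compare against. Your argument is correct and self-contained, and is in fact close in spirit to how such estimates are usually obtained.

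One small point deserves a sentence of justification. In your proof of \eqref{eq:ub} you invoke the integral Taylor remainder
\[
F(a,b)=(b-a)^2\int_0^1(1-s)\,\phi''\bigl(a+s(b-a)\bigr)\,ds,\qquad \phi(x)=|x|^p,
\]
but for $1<p<2$ the function $\phi$ is only $C^1$, with $\phi''(x)=p(p-1)|x|^{p-2}$ blowing up at $x=0$. When the segment $[a,b]$ crosses the origin the integrand is singular, so the formula is not immediate. It nevertheless holds: $\phi'(x)=p\,\mathrm{sgn}(x)|x|^{p-1}$ is absolutely continuous with locally integrable weak derivative $\phi''$, so the usual integration-by-parts derivation of the remainder formula goes through as an improper integral. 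With this remark in place, your pointwise bound $(p-1)\phi_\varepsilon''(x)\le\phi''(x)$ (valid for all $x\ne 0$, hence a.e.) integrates to give \eqref{eq:ub} exactly as you wrote. The homogeneity--compactness argument for \eqref{elem-ineq} is clean and requires no modification.
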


The main result of this section is the following Hardy-Stein identity.
\begin{theorem}
\label{th:HS}
If $1<p<\infty$ and $f\in L^p(\Rd)$, then
\begin{equation}
\label{eq:HS}
\int_\Rd |f(x)|^pdx=\int_0^\infty \int_\Rd \int_\Rd F(P_t f(x),P_t f(x+y))\nu(dy)dxdt.
\end{equation}
\end{theorem}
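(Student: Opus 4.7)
The plan is a regularized carr\'e du champ computation: introduce $\phi_\varepsilon(a):=(a^2+\varepsilon^2)^{p/2}-\varepsilon^p$, differentiate $H_\varepsilon(t):=\int_{\R^d}\phi_\varepsilon(P_tf(x))\,dx$ in $t$, symmetrize the resulting nonlocal bilinear form using the symmetry of $\nu$, integrate over $t\in(0,\infty)$, and finally let $\varepsilon\searrow0$. First I reduce to a convenient dense subclass, say $f\in L^1(\R^d)\cap L^p(\R^d)\cap L^\infty(\R^d)$; for such $f$, $P_tf$ is smooth, bounded, and lies in every $L^q(\R^d)$, and $H_\varepsilon(t)<\infty$ since the pointwise bound $\phi_\varepsilon(a)\le C(\varepsilon,p,\|f\|_\infty)\,a^2$ on $\{|a|\le\|f\|_\infty\}$ combined with $P_tf\in L^2(\R^d)$ applies. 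The general case follows by a standard approximation argument.

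For the core computation write $u(t,x)=P_tf(x)$. Smoothness of $P_tf$, the semigroup equation $\partial_tu=Lu$, and the pointwise representation \eqref{PtGen} of $L$ give
$$H_\varepsilon'(t)=\int_{\R^d}\!\int_{\R^d}\phi_\varepsilon'(u(t,x))\bigl(u(t,x+y)-u(t,x)\bigr)\,\nu(dy)\,dx.$$
Applying the symmetrization \eqref{eq:xxy} to the combined integrand $\phi_\varepsilon'(u(x))\bigl(u(x+y)-u(x)\bigr)$ and averaging with its original form, then invoking the elementary identity $F_\varepsilon(a,b)+F_\varepsilon(b,a)=(\phi_\varepsilon'(b)-\phi_\varepsilon'(a))(b-a)$ and one further use of \eqref{eq:xxy} to equate $\int\!\int F_\varepsilon(u(x+y),u(x))\,\nu(dy)\,dx$ with $\int\!\int F_\varepsilon(u(x),u(x+y))\,\nu(dy)\,dx$, yields
$$H_\varepsilon'(t)=-\int_{\R^d}\!\int_{\R^d}F_\varepsilon\bigl(P_tf(x),P_tf(x+y)\bigr)\,\nu(dy)\,dx.$$

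Integrating in $t\in(0,T)$ and letting $T\to\infty$, the boundary term $H_\varepsilon(T)$ vanishes: self-adjointness of $P_t$ on $L^2$ gives $\|P_Tf\|_2^2=\langle P_{2T}f,f\rangle\le p_{2T}(0)\|f\|_1^2$, and \eqref{e:HaWi} forces $p_t(0)\to0$ as $t\to\infty$, so $\|P_Tf\|_2\to0$; combined with $\|P_Tf\|_\infty\le\|f\|_\infty$ and the above bound on $\phi_\varepsilon$, this kills $H_\varepsilon(T)$. One thereby obtains the Hardy--Stein identity with $\phi_\varepsilon$ in place of $|\cdot|^p$; sending $\varepsilon\searrow0$ then produces \eqref{eq:HS} by monotone convergence on the left and, on the right, by dominated convergence using the bound $F_\varepsilon\le F/(p-1)$ from \eqref{eq:ub} when $1<p<2$ (with monotone convergence handling $p\ge2$). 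The main obstacle will be justifying the differentiation under the integral sign and the Fubini interchange in the symmetrization step when $\nu$ is an infinite measure, since the natural decomposition of the integrand into two pieces is not absolutely integrable; the fix is to symmetrize the combined difference rather than its summands and to exploit the nonnegativity $F_\varepsilon\ge0$ throughout.
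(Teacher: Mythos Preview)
Your approach shares the paper's core idea---differentiate $\int\phi(P_tf)\,dx$ in $t$ and recognize the derivative as $-\int\!\int F_\varepsilon(\ldots)\,\nu(dy)\,dx$---but the technical packaging differs, and a few points in your version need shoring up.

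The paper works pointwise first: it considers $P_t\xi(t)(x)$ with $\xi(t)=\phi_\varepsilon(P_{T-t}f)$ (and $\phi_0(a)=|a|^p$ when $p\ge2$, where no regularization is needed), proves $\frac{d}{dt}P_t\xi(t)(x)=P_t[\xi'(t)+L\xi(t)](x)$, and observes that $[\xi'(t)+L\xi(t)](x)=\int F_\varepsilon(u(x),u(x+y))\,\nu(dy)$. The point is that $\xi(t)\in C_0^2\subset\mathcal D(L)$, so $L\xi(t)(x)$ and $\xi'(t)(x)$ are each separately well-defined via the principal-value formula \eqref{PtGen}, and their sum is an absolutely convergent $\nu$-integral of the nonnegative integrand $F_\varepsilon$. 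This sidesteps exactly the Fubini difficulty you flag: one never needs the double integral $\int\!\int\phi_\varepsilon'(u(x))(u(x+y)-u(x))\,\nu(dy)\,dx$ to make sense absolutely. Your symmetrization route can be made rigorous (e.g.\ truncate to $|y|>\delta$, symmetrize, then let $\delta\searrow0$), but as written the justification is only a sketch.

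Two further gaps. First, your claim that monotone convergence handles $\varepsilon\searrow0$ when $p\ge2$ is false: for $p>2$ one has $F_\varepsilon(0,b)=(b^2+\varepsilon^2)^{p/2}-\varepsilon^p>|b|^p=F(0,b)$, so $F_\varepsilon$ does not decrease to $F$. The paper simply drops the regularization for $p\ge2$, since $|a|^p$ is already $C^2$; you should do the same. Second, the paper's extension to general $f\in L^p$ is cleaner than an unspecified density argument: applying the already-proved identity to $P_sf\in L^p\cap C_0^\infty$ and using the semigroup property gives $\|P_sf\|_p^p=\int_s^\infty\!\int\!\int F(P_tf(x),P_tf(x+y))\,\nu(dy)\,dx\,dt$, and both sides converge (the left by strong continuity, the right by monotonicity in $s$) as $s\searrow0$. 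A bare ``$f_n\to f$ in $L^p$'' argument requires extra work to show the right-hand side converges to the correct limit.
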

\begin{proof} 
We first prove the theorem assuming that $f\in L^p(\Rd)\cap C_0^2(\Rd)$. 
If $2\le p<\infty$, then we proceed as follows. Let $0\le t\le T$ and 
$$\xi(t)=|P_{T-t} f|^p.$$ Then 
$\xi(t)\in C_0^2(\Rd)\subset\CD(L)$ for every $t\in[0,T]$ since $P_tf
\in C_0^2(\Rd)$ for all $t\geq0$. Furthermore, 
if $u\in C_0^2(\Rd)$, then we have
\begin{align*}
\frac{\partial}{\partial x_i} |u|^p&=p|u|^{p-2}u u_i\,,\\
\frac{\partial^2}{\partial x_j\partial x_i}|u|^p&=p(p-1)|u|^{p-2}u_ju_i+p|u|^{p-2}uu_{ji}\,,
\end{align*}
hence $|u|^p\in C_0^2(\Rd)$. Also, $[0,T]\ni t\mapsto \xi(\cdot)(x)$ is of class $C^1$ for every $x\in\Rd$ as it can be seen from the following direct differentiation where $L$ denotes the generator of the semigroup. 
\begin{align}
\frac{d}{dt} \xi(t)(x)&=p P_{T-t}f(x)\left| P_{T-t}f(x)\right|^{p-2} \frac{d}{dt} P_{T-t}f(x)\nonumber\\
&=-p P_{T-t}f(x)\left| P_{T-t}f(x)\right|^{p-2} LP_{T-t}f(x).\label{eq:Xideriv}
\end{align}
We have 
\begin{align}\label{eq:FTofCalc}
P_T |f|^p(x)- |P_T f(x)|^p&=\int_0^T \frac{d}{dt}\left(P_t\xi(t)(x)\right)dt\\
&=\int_0^T \left[P_t \xi'(t)(x)+P_t L\xi(t)(x)\right] dt\label{eq:FTofCalc2}\\
&=\int_0^T P_t\left[ \xi'(t)+ L\xi(t)\right](x) dt.
\end{align}
The equality (\ref{eq:FTofCalc2}) requires some explanation. Following \cite{2014arXiv1411.7907B}, we  have
\begin{align}\label{eq:PtXideriv}
&\frac{P_{t+h}\xi(t+h)(x)-P_t\xi(t)(x)}{h}=P_{t+h}\xi'(t)(x)\\
&+P_{t+h}\left(\frac{\xi(t+h)-\xi(t)}{h}-\xi'(t)\right)(x)+\frac{P_{t+h}\xi(t)(x)-P_t\xi(t)(x)}{h}.
\end{align}
Recall that $\xi(t)\in\CD(L)$. By (\ref{eq:Xideriv}), $\xi'(t)\in C_0(\Rd)$ for every $t\in[0,T]$. 
Furthermore, since $P_t$ is strongly continuous and $p\geq2$, both $LP_{T-t}f$ and $P_{T-t}f\left| P_{T-t}f\right|^{p-2}$ 
are continuous mapping $[0,T]$ to $C_0(\Rd)$. In view of (\ref{eq:Xideriv}), $[0,T]\ni t \mapsto \xi'(t)\in C_0(\Rd)$ is also continuous. Letting $h\to0$ in (\ref{eq:PtXideriv}),  we get (\ref{eq:FTofCalc2}).
We then have
\begin{align}\label{eq:HeatEq}
[\xi'(t)+ L\xi(t)](x)&=\int_\Rd \left\{|P_{T-t} f(x+y)|^p-|P_{T-t} f(x)|^p\right. \\
&\left.-p P_{T-t}f(x)\left| P_{T-t}f(x)\right|^{p-2} \left[P_{T-t}f(x+y)-P_{T-t}f(x)\right]\right\}\nu(dy)\nonumber\\
&=\int_\Rd F(P_{T-t} f(x),P_{T-t} f(x+y))\nu(dy)\nonumber.
\end{align}
Integrating (\ref{eq:FTofCalc}) with respect to $x$ and using \eqref{eq:1} we obtain
\begin{equation*}
\int_\Rd |f(x)|^pdx-\int_\Rd |P_Tf(x)|^pdx=\int_0^T \int_\Rd \int_\Rd F(P_t f(x),P_t f(x+y))\nu(dy)dxdt.
\end{equation*}
But $\int_\Rd |P_Tf(x)|^pdx\to 0$ as $T\to \infty$ because of dominated convergence theorem. Indeed, by \eqref{maxineq} and $|P_Tf(x)|^p\le f^*(x)^p$ for every $x\in \Rd$,
and  for  $q=p/(p-1)$ by H\"older inequality we have
\begin{equation}
\left|\int p_T(x,y)f(y)dy\right|\le \|f\|_p\left(\int_\Rd p_T(x,y)^q dy\right)^{1/q},
\end{equation}
whereas $\int_\Rd p_T(x,y)^q dy\le \sup_{x,y\in \Rd} p_T(x,y)^{q-1}\to 0$ as $T\to \infty$. Thus, \eqref{eq:HS} follows. 

Suppose $1<p<2$. For $0\le t\le T$ and $\varepsilon>0$ we define
$$
\xi_{\varepsilon}(t)=\left((P_{T-t}f)^2+\varepsilon^2\right)^{p/2}-\varepsilon^p.
$$
As in the case $2\leq p<\infty$, we conclude that $\xi_{\varepsilon}(t)\in C_0^2(\Rd)\subset\CD(L)$ for every $t\in[0,T]$. 
Indeed, for any $u\in C_0^2(\Rd)$ we have
\begin{align*}
\frac{\partial}{\partial x_i} \left(u^2+\varepsilon^2\right)^{p/2}&=p\left(u^2+\varepsilon^2\right)^{(p-2)/2}u u_i\,,\\
\frac{\partial^2}{\partial x_j\partial x_i}\left(u^2+\varepsilon^2\right)^{p/2}&=p(p-2)\left(u^2+\varepsilon^2\right)^{(p-4)/2}u^2u_ju_i\,\\
&+p\left(u^2+\varepsilon^2\right)^{(p-2)/2}(u_ju_i+uu_{ji})\,.
\end{align*}
Furthermore, $[0,T]\ni t\mapsto \xi_{\varepsilon}(\cdot)(x)$ is also of class $C^1$ for every $x\in\Rd$, and
\begin{align*}
\frac{d}{dt} \xi_{\varepsilon}(t)(x)&=p P_{T-t}f(x)\left[(P_{T-t}f(x))^2+\varepsilon^2\right]^{(p-2)/2} \frac{d}{dt} P_{T-t}f(x)\\
&=-p P_{T-t}f(x)\left[(P_{T-t}f(x))^2+\varepsilon^2\right]^{(p-2)/2} LP_{T-t}f(x).
\end{align*}
Therefore $\xi'_{\varepsilon}(t)\in C_0(\Rd)$ for every $t\in[0,T]$, and $[0,T]\ni t\mapsto \xi'_{\varepsilon}(t)\in C_0(\Rd)$ is continuous. We have 
\begin{align}\label{eq:FTofCalcEps}
P_T \left((f^2+\varepsilon^2)^{p/2}\right)(x)- \left((P_Tf(x))^2+\varepsilon^2\right)^{p/2}
=\int_0^T \frac{d}{dt}\left(P_t\xi_{\varepsilon}(t)(x)\right)dt\\
=\int_0^T \left[P_t \xi'_{\varepsilon}(t)(x)+P_t L\xi_{\varepsilon}(t)(x)\right] dt=\int_0^T P_t\left[ \xi'_{\varepsilon}(t)+ 
L\xi_{\varepsilon}(t)\right](x) dt.
\end{align}
Consequently,
\begin{align*}
[\xi'_{\varepsilon}(t)+ L\xi_{\varepsilon}(t)](x)
&=\int_\Rd \left\{\left((P_{T-t}f(x+y))^2+\varepsilon^2\right)^{p/2}-\left((P_{T-t}f(x))^2+\varepsilon^2\right)^{p/2}\right. \\
&\left.-p P_{T-t}f(x)\left((P_{T-t}f(x))^2+\varepsilon^2\right)^{(p-2)/2} \left[P_{T-t}f(x+y)-P_{T-t}f(x)\right]\right\}\nu(dy)\\
&=\int_\Rd F_{\varepsilon}(P_{T-t} f(x),P_{T-t} f(x+y))\nu(dy).
\end{align*}
Integrating (\ref{eq:FTofCalcEps}) with respect to $x$ we obtain
\begin{align*}
&\int_\Rd\left[P_T \left((f^2+\varepsilon^2)^{p/2}\right)(x)-\left((P_Tf(x))^2+\varepsilon^2\right)^{p/2}\right]dx\\
&=\int_\Rd \left((f(x)^2+\varepsilon^2)^{p/2}-\varepsilon^p\right)dx
-\int_\Rd\left[\left((P_Tf(x))^2+\varepsilon^2\right)^{p/2}-\varepsilon^p\right]dx\\
&=\int_0^T\int_\Rd\int_\Rd F_{\varepsilon}(P_t f(x),P_t f(x+y))\nu(dy)dxdt.
\end{align*}
Note that the expression above is finite and uniformly bounded with respect to $T$ and $\varepsilon$. Indeed, since $0<p/2<1$, the function $x\mapsto x^{p/2}$ is $p/2$-H\"older continuous on $[0,\infty)$, we have
$$
(f(x)^2+\varepsilon^2)^{p/2}-\varepsilon^p\leq c_p |f(x)|^p,
$$
and
$$
\left((P_Tf(x))^2+\varepsilon^2\right)^{p/2}-\varepsilon^p\leq c_p|P_Tf(x)|^p.
$$
Let $\varepsilon\to0$. In view of (\ref{eq:ub}) and dominated convergence (see also \cite[Remark 7]{MR3251822}) we get
\begin{equation*}
\int_\Rd |f(x)|^pdx-\int_\Rd |P_Tf(x)|^pdx=\int_0^T \int_\Rd \int_\Rd F(P_t f(x),P_t f(x+y))\nu(dy)dxdt.
\end{equation*}
Using the same argument as in the previous part we get $\int_\Rd |P_Tf(x)|^pdx\to0$ as $T\to\infty$. 
This together with the previous case gives \eqref{eq:HS} for all $1<p<\infty$ and 
$f\in L^p(\Rd)\cap C_0^2(\Rd)$.

We next relax the assumption that $f\in  L^p(\Rd)\cap C_0^2(\Rd)$.  For $1<p<\infty$ and general $f\in L^p(\Rd)$ we let $s>0$. Then $P_sf\in L^p(\Rd)\cap C_0^{\infty}(\Rd)$, and so by the preceding, discussion
\begin{equation}\label{eq:HS2}
\int_\Rd |P_s f(x)|^pdx=\int_s^\infty \int_\Rd \int_\Rd F(P_t f(x),P_t f(x+y))\nu(dy)dxdt.
\end{equation}
By the strong continuity of $P_t$ in $L^p(\Rd)$, the left-hand side of \eqref{eq:HS2} tends to $\|f\|^p_p$ as $s\to 0$. The right-hand side also converges as $s\to 0$.
The theorem follows.
\end{proof}

\section{
Square functions} 
For $f\in L^1(\Rd)\cup L^\infty(\Rd)$ we let
$$
G(f)(x):=\left(\int_0^{\infty}\int_{\RR^d}(P_tf(x+y)-P_tf(x))^2\nu(dy)dt\right)^{1/2},
$$
and
$$
\widetilde{G}(f)(x):=\left(\int_0^{\infty}\int_{\left\{|P_tf(x)|>|P_tf(x+y)| \right\}}(P_tf(x+y)-P_tf(x))^2\nu(dy)dt\right)^{1/2}.
$$
Clearly, $0\leq \widetilde{G}(f)(x)\leq G(f)(x)$ for every $x$. By (\ref{eq:HS}) and the symmetry, 
\begin{equation}\label{eq:L2isometry}
\|f\|_2^2=\|G(f)\|_2^2=2\|\widetilde{G}(f)\|_2^2.
\end{equation}
By polarization, for $f,g\in L^2(\Rd)$ we have
\begin{equation}\label{eq:pol}
\langle f, g\rangle=
\int_\Rd \int_0^{\infty}\int_{\RR^d}[P_tf(x+y)-P_tf(x)]\,[P_tg(x+y)-P_tg(x)]\ \nu(dy)dtdx. 
\end{equation}

The main result of this section is the following theorem.
\begin{theorem}\label{thtildeGp}
Let $1< p<\infty$. There is a constant $C$ depending only on $p$ 
such that
\begin{equation}\label{eq:tildeGp}
C^{-1}\|f\|_p\leq\|\widetilde{G}(f)\|_p\leq C\|f\|_p, \qquad f\in L^p(\RR^d).
\end{equation}
\end{theorem}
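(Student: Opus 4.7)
The plan is to decouple the proof into upper and lower bounds in the two ranges $1<p\leq 2$ and $2\leq p<\infty$, using the Hardy-Stein identity (Theorem~\ref{th:HS}) in the first range and Burkholder-Gundy inequalities for a L\'evy martingale in the second. For the \emph{upper bound} when $1<p\leq 2$, I combine Theorem~\ref{th:HS} with the lower estimate of Lemma~\ref{lem:F} and the symmetrization \eqref{eq:xxy} to obtain
\begin{equation*}
\|f\|_p^p \approx \int_0^\infty\!\int_\Rd\!\int_{\{|P_tf(x)|\geq|P_tf(x+y)|\}}(P_tf(x+y)-P_tf(x))^2|P_tf(x)|^{p-2}\nu(dy)dxdt.
\end{equation*}
On the indicator set $|P_tf(x)|\leq f^*(x)$, and since $2-p\geq 0$ we have $|P_tf(x)|^{2-p}\leq f^*(x)^{2-p}$; writing $1=|P_tf(x)|^{p-2}|P_tf(x)|^{2-p}$ gives the pointwise bound $\widetilde G(f)(x)^2\leq f^*(x)^{2-p}J(x)$, where $J(x)$ denotes the inner $(y,t)$-integrand above. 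Raising to the $p/2$-power, integrating in $x$, and applying H\"older with conjugate exponents $2/p$ and $2/(2-p)$, together with Stein's maximal inequality (Lemma~\ref{stein}) for $\|f^*\|_p$ and the Hardy-Stein identity for $\int J\,dx$, yields $\|\widetilde G(f)\|_p\leq C_p\|f\|_p$.

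For the \emph{upper bound} when $p\geq 2$, since $\widetilde G(f)\leq G(f)$ pointwise it suffices to show $\|G(f)\|_p\leq C_p\|f\|_p$. Fixing $T>0$, I consider the purely discontinuous martingale $M_t=P_{T-t}f(X_t)$ under $\px$, with quadratic variation $[M]_T=\sum_{s\leq T}(\Delta M_s)^2$. The Burkholder-Gundy inequality for $p\geq 2$ combined with Doob's maximal inequality gives $\Ex[M]_T^{p/2}\leq C_p\Ex|M_T^*|^p\leq C_p'\Ex|M_T|^p=C_p'P_T|f|^p(x)$, so $\int_\Rd\Ex[M]_T^{p/2}dx\leq C_p'\|f\|_p^p$ uniformly in $T$. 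The remaining task---the most delicate step---is to identify this probabilistic quantity, after letting $T\to\infty$ and exploiting Markovianity together with translation invariance of Lebesgue measure, with a multiple of $\|G(f)\|_p^p$. The \emph{lower bound} for $p\geq 2$ follows directly from the same Hardy-Stein form: since $p-2\geq 0$, the estimate $|P_tf(x)|^{p-2}\leq f^*(x)^{p-2}$ on the indicator set gives $\|f\|_p^p\leq C\int f^*(x)^{p-2}\widetilde G(f)(x)^2 dx$, and H\"older with exponents $p/(p-2)$ and $p/2$ combined with Stein's maximal inequality yields $\|f\|_p\leq C'\|\widetilde G(f)\|_p$.

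Finally, the \emph{lower bound} for $1<p\leq 2$ proceeds by polarization and duality. Starting from \eqref{eq:pol} and applying \eqref{eq:halfp} pointwise in $t$, the polarization integral becomes twice a contribution from the asymmetric set $\{|P_tf(x)|\geq|P_tf(x+y)|\}$; Cauchy-Schwarz in the $\nu(dy)dt$-measure, pointwise in $x$, then gives
\begin{equation*}
|\langle f,g\rangle|\leq 2\int_\Rd\widetilde G(f)(x)G(g)(x)dx\leq 2\|\widetilde G(f)\|_p\|G(g)\|_{p'}.
\end{equation*}
Since $p'\geq 2$, the upper bound on $G$ from the previous paragraph gives $\|G(g)\|_{p'}\leq C\|g\|_{p'}$, and $L^{p'}$-$L^p$ duality yields $\|f\|_p\leq C'\|\widetilde G(f)\|_p$. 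The main technical obstacle I anticipate lies in the intermediate step of the $p\geq 2$ upper bound, namely passing from $\int\Ex[M]_T^{p/2}dx$ to $\|G(f)\|_p^p$: at $p=2$ this is an equality via Fubini and spatial stationarity of $X_t$, but for $p>2$ Jensen points the wrong way, and a more careful probabilistic argument---perhaps via a parabolic or time-reversed martingale representation, or a direct Rosenthal-type analysis of the Poisson stochastic integral---seems necessary to preserve the sharp $L^p$ structure of $G(f)$.
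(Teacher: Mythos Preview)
Your overall architecture---Hardy-Stein for the upper bound when $1<p\le 2$, Burkholder-Gundy on the parabolic martingale for the upper bound when $p\ge 2$, Hardy-Stein plus H\"older for the lower bound when $p\ge 2$, and polarization/duality for the lower bound when $1<p\le 2$---is exactly the route the paper takes. Three of the four pieces you sketch are essentially identical to the paper's arguments (Lemma~\ref{lem:seGtpsmall} and Lemma~\ref{tildeGp>2}).

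The genuine gap is precisely the one you flag: bridging $\int_{\Rd}\Ex[M]_T^{p/2}\,dx$ and $\|G(f)\|_p^p$ for $p>2$. You are right that Jensen applied to $G(f)(x)^2$ directly goes the wrong way, and neither time reversal nor a Rosenthal-type bound is how the paper resolves it. The missing idea is to interpose a \emph{larger} square function
\[
G_{*}(f)(x)=\Big(\int_{0}^{\infty}\!\!\int_{\Rd}\!\int_{\Rd}|P_tf(z+y)-P_tf(z)|^2\,p_t(x,z)\,dz\,\nu(dy)\,dt\Big)^{1/2},
\]
and its truncation $G_{*,T}$, which admits the bridge representation
\[
G_{*,T}^2(f)(x)=\int_{\Rd}\E_{z}\big([M]_T\,\big|\,X_T=x\big)\,p_T(z,x)\,dz.
\]
The point is that $G_{*,T}^2(f)(x)$ is a \emph{double average} of $[M]_T$: a conditional expectation followed by an integral against the probability density $p_T(\cdot,x)$. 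Hence for $p/2\ge 1$ Jensen goes the \emph{right} way twice, giving
\[
\int_{\Rd}G_{*,T}^p(f)(x)\,dx\ \le\ \int_{\Rd}\E_z\big([M]_T^{p/2}\big)\,dz,
\]
which Burkholder-Gundy then bounds by $C_p\|f\|_p^p$. The second ingredient is the pointwise comparison $G(f)(x)\le \sqrt{2}\,G_*(f)(x)$, obtained by writing $P_t=P_{t/2}P_{t/2}$, applying Jensen to the outer $P_{t/2}$, and changing variables $t\mapsto 2t$. This closes the argument without any appeal to stationarity or translation invariance beyond what is already built into the semigroup.
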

The result is proved below after a sequence of partial results. In another direction, at the end of this section we show in Example~\ref{eq:incGG} that $G$ is too large to give a characterization of $L^p(\Rd)$ for $1<p<2$. Nevertheless, 
$\|G(f)\|_p\leq C_p\|f\|_p$, for $2\leq p<\infty$, as we now prove
by using the Burkholder-Gundy inequalities.

We start by introducing the Littlewood-Paley function  $G_{*}$  which is
the conditional expectation of 
the quadratic variation of a martingale.  For classical harmonic functions in the upper half-space of $\R^d$, such objects have appeared many times in the literature, see for example  \cite{MR855179}, \cite{MR797052}. 
The construction for the generalized Poisson semigroups is presented in \cite{MR583240}.  
Here we simply fix $f\in C_c^{\infty}(\RR^d)$, $T>0$, and let 
$$M_t=P_{T-t}f(X_t)-P_Tf(z), \quad 0<t<T.$$ 
When the process $X_t$ starts at $z\in \Rd$, $M_t$ is a martingale starting at $0$.
Such  space-time ({\it parabolic}) 
martingales were first 
used for the Brownian motion in Ba\~nuelos and M\'endez-Hern\'andez  \cite{MR2001941} to study martingale transforms 
that lead  to 
Fourier multipliers related to the Beurling-Ahlfors operator.  They were then applied
in  \cite{MR2345912, MR3263924} 
to more general L\'evy processes.  
We recall the properties of $M_t$ here to clarify the use of the Burkholder-Gundy inequality and to elucidate the origins of our Littlewood-Paley square functions.   For full details, we refer the reader to \cite{MR3263924}.

Applying the It\^o formula (see \cite[p.~1118]{MR3263924}, where this is done for general L\'evy processes) we have that 
\begin{eqnarray}\label{levyito}
M_t = \int_{0}^{t}\int_{\R^d}\left[P_{T-s}f(X_{s-}+y)
- P_{T-s}f(X_{s-})\right]\tilde{N}(ds,dy), \,\,\,\, 0<t<T. 
\end{eqnarray}
Here  $$ {\tilde N}(t,A) = N(t,A) - t \nu(A),$$ 
and  $N$ is a Poisson random measure on ${\R}^{+} \times \R^d$ with intensity
measure $dt\times d\nu$.  In fact we take
$$N(t, A) = \#\{0 \leq s \leq t,\Delta X_s \in A\}, \qquad t \geq 0,\ A \subset \R^d,$$
where $\Delta X_s=X_s-X_{s^{-}}$ denotes the jump of the process at time $s > 0$. The quadratic variation of $M_t$  is
\begin{eqnarray*}
[M]_t= \int_{0}^{t}\int_{\R^d}|P_{T-s}f(X_{s-}+y)
- P_{T-s}f(X_{s-})|^2d\nu{(y)\,ds}.
\end{eqnarray*}
For a slightly different representation of \eqref{levyito} without using the process $N$, and 
for references to It\^o's formula for processes with jumps, see \cite[p.~847]{MR2928339}.

We now define
\begin{eqnarray*}
G_{*}(f)(x)=\left(\int_{0}^{\infty}\int_{\R^d}\int_{\R^d}|P_tf(z+y)-P_tf(z)|^2 p_t(x, z)dz\nu(dy) dt\right)^{1/2},
\end{eqnarray*}
and
\begin{eqnarray*}
G_{*,T}(f)(x)=\left(\int_{0}^{T}\int_{\R^d}\int_{\R^d}|P_tf(z+y)-P_tf(z)|^2 p_t(x, z)dz\nu(dy) dt\right)^{1/2}.
\end{eqnarray*}
Notice that
$G_{*,T}(f)(x)\nearrow G_{*}(f)(x)$ as $T\to\infty$. We claim that 
\begin{equation}\label{eq:G*conditional}
G^2_{*, T}(f)(x)=
\int_{\R^d}\E_{z}^{x}\left(\int_{0}^{T}\int_{\R^d}|P_{T-s}f(X_s+y)-P_{T-s}f(X_s)|^2\nu(dy)ds\right)p_{T}(z,x) dz,
\end{equation}
where 
\begin{align*}
\E_{z}^{x}&\left(\int_{0}^{T}\int_{\R^d}|P_{T-s}f(X_s+y)-P_{T-s}f(X_s)|^2\nu(dy)ds\right)\\
:=\E_{z}&\left(\int_{0}^{T}\int_{\R^d}|P_{T-s}f(X_s+y)-P_{T-s}f(X_s)|^2\nu(dy)ds\,\big|\,X_T=x\right),
\end{align*}
cf. below. Thus, 
$$
G^2_{*, T}(f)(x)=\int_{\R^d}\E_{z}\left([M]_T\,\big|\,X_T=x\right)p_{T}(z,x) dz=\int_{\R^d}\left(\E^x_{z}[M]_T\right) \, p_{T}(z,x) dz.
$$
The proof of  \eqref{eq:G*conditional}  is exactly the same as the proof for harmonic functions in the upper half-space of $\R^d$ given in \cite[p.~663]{MR855179}. (See \cite{MR583240} for the more general construction for Poisson semigroups.) 
Indeed, by the definition of the conditional distribution of $X_s$ under $\p_z$ given $X_T=x$, we have
\begin{align*}
&\int_{\R^d}\E_{z}^{x}\left(\int_{0}^{T}\int_{\R^d}|P_{T-s}f(X_s+y)-P_{T-s}f(X_s)|^2\nu(dy)ds\right)p_{T}(z,x) dz\\
=&\int_{\R^d}\left(\int_{0}^{T}\int_{\R^d}\frac{p_s(z,w)p_{T-s}(w,x)}{p_T(z,x)}\int_{\R^d}|P_{T-s}f(w+y)-P_{T-s}f(w)|^2\nu(dy)dwds\right)p_{T}(z,x)dz\\
=&\int_{0}^{T}\int_{\R^d}p_{T-s}(w,x)\int_{\R^d}|P_{T-s}f(w+y)-P_{T-s}f(w)|^2\nu(dy)dwds\\
=&\int_{0}^{T}\int_{\R^d}\int_{\R^d}|P_{s}f(w+y)-P_{s}f(w)|^2p_{s}(x,w)dw\nu(dy)ds=G^2_{*, T}(f)(x).
\end{align*}
With \eqref{eq:G*conditional} established, we now apply the martingale inequalities to prove that $\|G_{*}(f)\|_p\leq C_p\|f\|_p$ for $2\leq p<\infty$, which also yields the same result for $G(f)$.
\begin{lemma}\label{GfEstimate}
Let $2\le p<\infty$. There is a constant $C$ depending only on $p$ 
such that
$\|G(f)\|_p\leq C\|f\|_p$ for every $f\in L^p(\RR^d)$. 
\end{lemma}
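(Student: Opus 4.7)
The strategy is to bound $G_*(f)$ in $L^p$ via the martingale representation just derived, and then to pointwise dominate $G(f) \le \sqrt{2}\,G_*(f)$ by a Jensen argument; once proved for $f \in C_c^\infty(\Rd)$ the result extends to $L^p(\Rd)$ by density and Fatou's lemma, so I assume $f \in C_c^\infty$ throughout the main argument.

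Starting from the identity
\begin{equation*}
G_{*,T}^2(f)(x) = \int_\Rd \E_z^x[M]_T\, p_T(z,x)\, dz,
\end{equation*}
I observe that $p_T(z,x)\,dz$ is a probability measure on $\Rd$ (by $p_T(z,x)=p_T(x,z)$ and the Markovian property). Since $p/2 \ge 1$, Jensen's inequality gives
\begin{equation*}
G_{*,T}^p(f)(x) \le \int_\Rd \bigl(\E_z^x[M]_T\bigr)^{p/2}\, p_T(z,x)\, dz \le \int_\Rd \E_z^x\bigl([M]_T^{p/2}\bigr)\, p_T(z,x)\, dz,
\end{equation*}
where the second inequality is Jensen's for the conditional expectation. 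Integrating in $x$ and exchanging the order of integration (via $\int \E_z^x Y\, p_T(z,x)\,dx = \E_z Y$) yields
\begin{equation*}
\|G_{*,T}(f)\|_p^p \le \int_\Rd \E_z [M]_T^{p/2}\, dz.
\end{equation*}
Since $p \ge 2$, the Burkholder-Davis-Gundy and Doob inequalities combine to give $\E_z[M]_T^{p/2} \le C_p \E_z|M_T|^p$ for the c\`adl\`ag $L^p$-martingale $M_t$. Taking $M_T = f(X_T) - P_T f(z)$ (the $L^p$ limit of $M_t$ as $t \to T^-$, valid because $P_s f \to f$ uniformly as $s \to 0$ for $f \in C_c^\infty$), I estimate $\E_z|M_T|^p \le 2^{p-1}(P_T|f|^p(z) + |P_T f(z)|^p) \le 2^p P_T|f|^p(z)$ using Jensen once more. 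The Markovian property $\int_\Rd P_T|f|^p(z)\,dz = \|f\|_p^p$ then gives $\|G_{*,T}(f)\|_p \le C_p\|f\|_p$ uniformly in $T$, and monotone convergence as $T \nearrow \infty$ yields $\|G_*(f)\|_p \le C_p\|f\|_p$.

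Finally, I compare $G$ and $G_*$ pointwise. By translation invariance, $P_t f(x+y) = P_t(\tau_y f)(x)$ with $\tau_y f(\cdot)=f(\cdot+y)$, and the semigroup law $P_t = P_{t/2}\circ P_{t/2}$ combined with Jensen's inequality applied to the averaging operator $P_{t/2}$ gives
\begin{equation*}
\bigl(P_t f(x+y)-P_t f(x)\bigr)^2 \le P_{t/2}\Bigl(\bigl(P_{t/2} f(\cdot+y)-P_{t/2} f(\cdot)\bigr)^2\Bigr)(x).
\end{equation*}
Integrating against $\nu(dy)\,dt$ and substituting $s=t/2$ yields $G(f)(x)^2 \le 2\,G_*(f)(x)^2$; combined with the previous step, $\|G(f)\|_p \le \sqrt{2}\,C_p\|f\|_p$. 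I expect the main subtlety to lie in justifying the Fubini and conditional-expectation manipulations in the $G_{*,T}$ bound, particularly the exchange $\int \E_z^x[M]_T^{p/2}\, p_T(z,x)\, dx = \E_z[M]_T^{p/2}$, though this is routine once $[M]_T^{p/2}$ is known to be $\p_z$-integrable, which follows a posteriori from BDG and the $L^p$-bound on $|M_T|$.
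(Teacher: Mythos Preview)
Your proof is correct and essentially identical to the paper's: both bound $\|G_{*,T}(f)\|_p$ via Jensen's inequality and the Burkholder--Gundy inequality for the parabolic martingale $M_t$, then show $G(f)\le\sqrt{2}\,G_*(f)$ by the semigroup-plus-Jensen argument, and finally extend from $C_c^\infty$ to $L^p$ by density and Fatou. The only differences are presentational---you make the two Jensen applications and the identification of $M_T$ explicit, while the paper compresses these into single steps.
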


\begin{proof}
  Since $p\geq2$, by Jensen's inequality we get
\begin{align*}
&\int_{\R^d}G^p_{*, T}(f)(x)dx\\
\leq &\int_{\R^d}\int_{\R^d}\E_{z}^{x}\left(\int_{0}^{T}\int_{\R^d}|P_{T-s}f(X_s+y)-P_{T-s}f(X_s)|^2\nu(dy)ds\right)^{p/2}p_{T}(z,x) dzdx\\
=&\int_{\R^d}\E_{z}\left(\int_{0}^{T}\int_{\R^d}|P_{T-s}f(X_s+y)-P_{T-s}f(X_s)|^2\nu(dy)ds\right)^{p/2}dz.
\end{align*}
By the Burkholder-Gundy inequality 
the last term above is less than
\begin{align*}
&C_p\int_{\R^d}\E_{z}\left|f(X_T)-P_Tf(z)\right|^pdz\leq C_p\int_{\R^d}(\E_{z}|f(X_T)|^p+P_T|f|^p(z))dz\\
= &C_p\int_{\R^d}P_T|f(z)|^pdz=C_p\|f\|^p_p.
\end{align*}
By the monotone convergence,
$$
\int_{\R^d}G^p_{*}(f)(x)dx=\lim_{T\to\infty}\int_{\R^d}G^p_{*, T}(f)(x)dx \leq C_p\|f\|^p_p.
$$
We claim that $G(f)(x)\leq\sqrt{2}G_{*}(f)(x)$. Indeed,
by the semigroup property and Jensen's inequality,
\begin{eqnarray*}
G^2(f)(x)&=&\int_{0}^{\infty}\int_{\R^d}|P_tf(x+y)-P_tf(x)|^2\nu(dy) dt\\
&=&\int_{0}^{\infty}\int_{\R^d}|P_{t/2}P_{t/2}f(x+y)-P_{t/2}P_{t/2}f(x)|^2\nu(dy)dt\nonumber\\
&\leq &\int_{0}^{\infty}\int_{\R^d}P_{t/2}|P_{t/2}f(x+y)-P_{t/2}f(x)|^2\nu(dy)dt\nonumber\\
&=&\int_{0}^{\infty}\int_{\R^d}\int_{\R^d}|P_{t/2}f(z+y)-P_{t/2}f(z)|^2 p_{t/2}(x, z)dz\nu(dy)dt\nonumber\\
&=&2\int_{0}^{\infty}\int_{\R^d}\int_{\R^d}|P_{t}f(z+y)-P_{t}f(z)|^2 p_{t}(x, z)dz\nu(dy)dt\nonumber.
\end{eqnarray*}
This completes the proof of the lemma for $f\in C_c^{\infty}(\RR^d)$.  
For arbitrary $f\in L^p(\Rd)$, we choose $f_n\in C_c^{\infty}(\Rd)$ such that $f_n\to f$ in $L^p$. The inequality $\|G(f)\|_p\leq C\|f\|_p$ follows from Fatou's lemma.
\end{proof}
For every $2\leq p<\infty$ and 
$f\in L^p(\RR^d)$ we have by (\ref{eq:HS}), (\ref{elem-ineq}) and \eqref{eq:halfp},
\begin{eqnarray}
\|f\|_p^p&\asymp& \int_{\RR^d}\int_0^{\infty}\int_{\RR^d}(P_tf(x+y)-P_tf(x))^2
(|P_tf(x+y)|\vee |P_tf(x)|)^{p-2}\nu(dy)dtdx\nonumber \\
&=&2 \int_{\RR^d}\int_0^{\infty}\int_{\left\{|P_tf(x)|>|P_tf(x+y)| \right\}}(P_tf(x+y)-P_tf(x))^2
|P_tf(x)|^{p-2}\nu(dy)dtdx \nonumber\\
&
\leq& 2\int_{\RR^d}f^*(x)^{p-2}\widetilde{G}(f)(x)^2dx.\label{eq:fsGt}
\end{eqnarray}
\begin{lemma}\label{tildeGp>2}
Suppose $2\le p<\infty$. There is a constant $C$ depending only on $p$ 
such that
\begin{equation}\label{ineq:LP1}
C^{-1}\|f\|_p\leq\|\widetilde{G}(f)\|_p\leq C\|f\|_p, \qquad f\in L^p(\RR^d).
\end{equation}
\end{lemma}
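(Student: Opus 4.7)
The plan is to deduce both directions of \eqref{ineq:LP1} from information already in hand: the Hardy--Stein identity (Theorem~\ref{th:HS}), the elementary bound \eqref{elem-ineq}, the symmetrization \eqref{eq:halfp}, Stein's maximal inequality (Lemma~\ref{stein}), and the $L^p$ bound $\|G(f)\|_p \le C\|f\|_p$ for $p\ge 2$ (Lemma~\ref{GfEstimate}).

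For the upper bound, the work is essentially done. Since $0\le \widetilde{G}(f)(x)\le G(f)(x)$ pointwise and $p\ge 2$, I simply invoke Lemma~\ref{GfEstimate}:
\[
\|\widetilde{G}(f)\|_p \;\le\; \|G(f)\|_p \;\le\; C\|f\|_p.
\]

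For the lower bound, the key is the chain of inequalities \eqref{eq:fsGt}, which gives
\[
\|f\|_p^p \;\le\; 2\int_{\mathbb{R}^d} f^{*}(x)^{p-2}\,\widetilde{G}(f)(x)^2\,dx.
\]
I would apply H\"older's inequality with conjugate exponents $p/(p-2)$ and $p/2$ (taking the first factor to be $1$ if $p=2$, in which case the bound reduces to the $L^2$ isometry \eqref{eq:L2isometry}) to obtain
\[
\int_{\mathbb{R}^d} f^{*}(x)^{p-2}\,\widetilde{G}(f)(x)^2\,dx \;\le\; \|f^{*}\|_p^{\,p-2}\,\|\widetilde{G}(f)\|_p^{2}.
\]
Stein's maximal inequality then gives $\|f^{*}\|_p \le \frac{p}{p-1}\|f\|_p$, so combining yields
\[
\|f\|_p^p \;\le\; 2\left(\tfrac{p}{p-1}\right)^{p-2}\|f\|_p^{\,p-2}\,\|\widetilde{G}(f)\|_p^{2}.
\]
Assuming $\|f\|_p<\infty$ and $f\not\equiv 0$, I can divide through by $\|f\|_p^{p-2}$ to conclude $\|f\|_p \le C\|\widetilde{G}(f)\|_p$ with an explicit constant depending only on $p$.

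The main (minor) obstacle to be careful about is the finiteness step: to divide by $\|f\|_p^{p-2}$ one needs both sides finite, which requires knowing a priori that $\|\widetilde G(f)\|_p<\infty$. This is already provided by the upper bound above, so I would establish the upper bound first. A mild caveat is also that \eqref{eq:fsGt} was derived assuming $f\in L^p$; no density argument is needed here since the chain \eqref{eq:fsGt} is valid for any $f\in L^p(\mathbb{R}^d)$ via Theorem~\ref{th:HS}. Everything else is bookkeeping.
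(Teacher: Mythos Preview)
Your proof is correct and follows essentially the same approach as the paper: the upper bound via $\widetilde{G}(f)\le G(f)$ and Lemma~\ref{GfEstimate}, and the lower bound via \eqref{eq:fsGt}, H\"older with exponents $p/(p-2)$ and $p/2$, and Stein's maximal inequality. Your added remarks on the $p=2$ case and on finiteness before dividing by $\|f\|_p^{p-2}$ are sensible and do not depart from the paper's argument.
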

\begin{proof}
Since $\widetilde{G}(f)(x)\leq G(f)(x)$, the right-hand side of (\ref{ineq:LP1}) follows immediately from Lemma~\ref{GfEstimate}.
By H\"older's inequality,
$$
\int_{\RR^d}f^*(x)^{p-2}\widetilde{G}(f)(x)^2dx\leq\left[\int_{\RR^d}(f^*(x)^{p-2})^{\frac{p}{p-2}}dx\right]^{\frac{p-2}{p}}
\left[\int_{\RR^d}(\widetilde{G}(f)(x)^{2})^{\frac{p}{2}}dx\right]^{\frac{2}{p}}
$$
$$
=\|f^*\|_p^{p-2}\|\widetilde{G}(f)\|_p^{2}\leq C\|f\|_p^{p-2}\|\widetilde{G}(f)\|_p^{2}.
$$
By \eqref{eq:fsGt} and \eqref{maxineq} we get $\|f\|_p^p\leq C\|f\|_p^{p-2}\|\widetilde{G}(f)\|_p^{2}$, which yields the result.
\end{proof}

Combining Lemma~\ref{GfEstimate} and Lemma \ref{tildeGp>2} we obtain the following.
\begin{corollary}\label{Gp>2}
Suppose $2\le p<\infty$. There is a constant $C$ depending only on $p$ 
such that
\begin{equation}\label{ineq:Gp>2}
C^{-1}\|f\|_p\leq\|G(f)\|_p\leq C\|f\|_p, \qquad f\in L^p(\RR^d).
\end{equation}
\end{corollary}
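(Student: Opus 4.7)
The proof is a direct combination of the two preceding results, so the plan is very short. The upper bound $\|G(f)\|_p \le C\|f\|_p$ for $2 \le p < \infty$ is exactly the conclusion of Lemma~\ref{GfEstimate}, so nothing further is needed for that half.

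For the lower bound, I would invoke Lemma~\ref{tildeGp>2}, which gives $\|f\|_p \le C\|\widetilde{G}(f)\|_p$ in the same range of $p$. Since by definition the integration in $\widetilde{G}(f)$ runs over the smaller set $\{|P_tf(x)| > |P_tf(x+y)|\}$, the pointwise inequality
\begin{equation*}
\widetilde{G}(f)(x) \le G(f)(x), \qquad x \in \R^d,
\end{equation*}
holds trivially. Integrating the $p$-th power yields $\|\widetilde{G}(f)\|_p \le \|G(f)\|_p$, and combining this with Lemma~\ref{tildeGp>2} gives
\begin{equation*}
\|f\|_p \le C\|\widetilde{G}(f)\|_p \le C\|G(f)\|_p,
\end{equation*}
which is the desired lower bound.

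There is no genuine obstacle here: the corollary is simply packaging the upper bound of Lemma~\ref{GfEstimate} together with the lower bound of Lemma~\ref{tildeGp>2} via the pointwise domination $\widetilde{G}(f) \le G(f)$. Note that this argument crucially uses $p \ge 2$; for $1 < p < 2$ the estimate $\|G(f)\|_p \le C\|f\|_p$ fails (as the authors announce in Example~\ref{eq:incGG} below), so the symmetric square function $G$ does not characterize $L^p$ in that range and the corollary cannot be extended.
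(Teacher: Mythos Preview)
Your argument is correct and matches the paper's own reasoning: the corollary is stated with only the one-line justification ``Combining Lemma~\ref{GfEstimate} and Lemma~\ref{tildeGp>2} we obtain the following,'' and your proposal spells out precisely that combination, using the pointwise inequality $\widetilde{G}(f)\le G(f)$ (noted explicitly earlier in the paper) to pass from the lower bound for $\widetilde{G}$ to one for $G$.
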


We now discuss the regime $1<p<2$. 

\begin{lemma}\label{lem:seGtpsmall}
Suppose $1<p<2$. There is a constant $C$ depending only on $p$ such that 
\begin{equation}\label{eq:1p2feGf}
C^{-1}\|f\|_p\leq\|\widetilde{G}(f)\|_p\leq C\|f\|_p, \qquad f\in L^p(\RR^d).
\end{equation}
\end{lemma}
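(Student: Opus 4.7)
The plan is to split into the upper and the lower bound; neither inequality reduces to the $p\ge 2$ case, because $G$ is known to be too large in $L^p$ for $1<p<2$, and the Burkholder--Gundy route used for $\widetilde G$ when $p\ge 2$ relies on $p\ge 2$ in a Jensen step.

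\textbf{Upper bound.}
Starting from the Hardy--Stein identity (Theorem~\ref{th:HS}), the two-sided estimate in Lemma~\ref{lem:F}, and the symmetrization \eqref{eq:halfp} (exactly as in the derivation of \eqref{eq:fsGt}, but now with $p-2<0$) one obtains
\[
 \|f\|_p^p \approx \int_\Rd H(x)\,dx,\quad H(x):=\int_0^\infty\!\!\int_{\{|P_tf(x)|>|P_tf(x+y)|\}}(P_tf(x+y)-P_tf(x))^2|P_tf(x)|^{p-2}\nu(dy)\,dt.
\]
The crux is a pointwise comparison of $\widetilde G(f)(x)^p$ with $H(x)$ and $f^*(x)$. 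Write, on $\{|P_tf(x)|>|P_tf(x+y)|\}$,
\[
 (P_tf(x+y)-P_tf(x))^2=\bigl[(P_tf(x+y)-P_tf(x))^2|P_tf(x)|^{p-2}\bigr]^{p/2}\bigl[(P_tf(x+y)-P_tf(x))^2|P_tf(x)|^{p}\bigr]^{(2-p)/2},
\]
and apply H\"older's inequality in $(y,t)$ with conjugate exponents $2/p$ and $2/(2-p)$. Using $|P_tf(x)|\le f^*(x)$ to control the second factor gives
\[
 \widetilde G(f)(x)^2\le H(x)^{p/2}\,f^*(x)^{p(2-p)/2}\,\widetilde G(f)(x)^{2-p},
\]
that is, $\widetilde G(f)(x)^p\le H(x)^{p/2}f^*(x)^{p(2-p)/2}$. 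Integrating in $x$ and applying H\"older in $x$ again with the same exponents yields
\[
 \|\widetilde G(f)\|_p^p\le\Bigl(\int_\Rd H\,dx\Bigr)^{p/2}\|f^*\|_p^{p(2-p)/2}\le C\|f\|_p^{p^2/2}\|f\|_p^{p(2-p)/2}=C\|f\|_p^p,
\]
where the last step uses $\int H\,dx\approx\|f\|_p^p$ together with Stein's maximal inequality (Lemma~\ref{stein}).

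\textbf{Lower bound.}
Apply the change of variables $(x,y)\mapsto(x+y,-y)$ and the symmetry of $\nu$ to split the polarization \eqref{eq:pol} into two equal pieces (the signs cancel in the product of differences), giving
\[
 \langle f,g\rangle=2\int_\Rd\!\int_0^\infty\!\!\int_{\{|P_tf(x)|>|P_tf(x+y)|\}}[P_tf(x+y)-P_tf(x)][P_tg(x+y)-P_tg(x)]\nu(dy)\,dt\,dx.
\]
Cauchy--Schwarz in $(y,t)$ against the measure $\ind_{\{|P_tf(x)|>|P_tf(x+y)|\}}\nu(dy)\,dt$, followed by the trivial bound obtained by dropping the indicator from the $g$-factor, produces $|\langle f,g\rangle|\le 2\int_\Rd\widetilde G(f)(x)G(g)(x)\,dx$, and H\"older in $x$ yields
\[
 |\langle f,g\rangle|\le 2\|\widetilde G(f)\|_p\,\|G(g)\|_{p'}.
\]
Since $p'>2$, Corollary~\ref{Gp>2} gives $\|G(g)\|_{p'}\le C\|g\|_{p'}$, hence $|\langle f,g\rangle|\le C\|\widetilde G(f)\|_p\|g\|_{p'}$. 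Taking the supremum over $g$ with $\|g\|_{p'}\le 1$ and invoking $L^p$--$L^{p'}$ duality completes the lower bound.

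\textbf{Main obstacle.}
The principal difficulty lies in the upper bound. Because the chain rule fails for nonlocal generators, the classical Stein strategy for $1<p<2$ is unavailable, and one must extract $\widetilde G(f)^p$ from the Hardy--Stein functional through a carefully chosen H\"older factorization that introduces the compensating power $|P_tf(x)|^p$; this spurious factor is absorbed pointwise by $f^*(x)^p$ and controlled globally by Stein's maximal inequality. The lower bound, by contrast, is a clean duality consequence of the $p>2$ upper bound on $G$.
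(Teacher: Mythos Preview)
Your argument follows the paper's approach on both halves, but two points deserve attention.

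\textbf{Upper bound.} Your H\"older factorization in $(y,t)$ lands on the same pointwise inequality the paper uses, namely $\widetilde G(f)(x)^p\le H(x)^{p/2}f^*(x)^{p(2-p)/2}$, but the route is needlessly circuitous and has a soft spot: to pass from $\widetilde G(f)(x)^2\le H(x)^{p/2}f^*(x)^{p(2-p)/2}\widetilde G(f)(x)^{2-p}$ to the desired bound you must divide by $\widetilde G(f)(x)^{2-p}$, which is not justified where $\widetilde G(f)(x)=\infty$. The paper avoids this by the one-line observation that on $\{|P_tf(x)|>|P_tf(x+y)|\}$ one has $|P_tf(x)|^{2-p}\le f^*(x)^{2-p}$ (since $2-p>0$), giving directly $\widetilde G(f)(x)^2\le f^*(x)^{2-p}H(x)$; raising to the power $p/2$ yields your inequality with no division. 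The subsequent H\"older step in $x$ and the appeal to Lemma~\ref{stein} are then identical.

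\textbf{Lower bound.} The strategy (polarization, symmetrization via \eqref{eq:xxy}, Cauchy--Schwarz, then the $L^{p'}$ bound for $G$ with $p'>2$) is exactly the paper's, but you invoke \eqref{eq:pol} for $f\in L^p$ and $g\in L^{p'}$, whereas that identity is only established for $L^2$ functions. The paper handles this by replacing $f$ with $f_s:=P_sf\in L^p\cap L^2$ (ultracontractivity) and testing against compactly supported truncations $\varphi_n\in L^2\cap L^{p'}$ of the dual function $g_s=|f_s|^{p-1}\sgn f_s$; after applying the estimate one lets $n\to\infty$ and then $s\to 0$, using that $\widetilde G(f_s)\nearrow\widetilde G(f)$ by monotone convergence. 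Without some such regularization your duality step is formal.
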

\begin{proof}
We first consider the right-hand inequality. Our proof proceeds exactly as the proof in \cite[pp. 87-88]{MR0290095} for the boundedness of the Littlewood-Paley square function $g$ in the range $1<p<2$.  Here, however, instead of using the Hardy-Littlewood maximal function and its boundedness on $L^p(\Rd)$,  we  use the maximal function of the semigroup and Lemma \ref{stein}.  Also, in place of the identity Lemma 2 of \cite[p.88]{MR0290095}, we use our Hardy-Stein identity. More precisely, setting 
$$
I(x)=
\int_0^{\infty}\int_{\left\{|P_tf(x)|>|P_tf(x+y)| \right\}}(P_tf(x+y)-P_tf(x))^2|P_tf(x)|^{p-2}\nu(dy)dt
$$
we have, by \eqref{eq:HS} of Theorem \ref{th:HS} and \eqref{elem-ineq} of Lemma \ref{lem:F}, that there exists a constant $C_p$ depending only on $p$  such that 
\begin{equation}\label{HS-bound}
\int_{\Rd}I(x)dx\leq C_p\int_{\Rd}|f(x)|^p dx. 
\end{equation}
Now observe that 
\begin{eqnarray*}
\widetilde{G}(f)(x)^2&=&\int_0^{\infty}\int_{\left\{|P_tf(x)|>|P_tf(x+y)| \right\}}(P_tf(x+y)-P_tf(x))^2\nu(dy)dt\\
&=&\int_0^{\infty}\int_{\left\{|P_tf(x)|>|P_tf(x+y)| \right\}}(P_tf(x+y)-P_tf(x))^2|P_tf(x)|^{p-2}|P_tf(x)|^{2-p}\nu(dy)dt\\
&\leq & f^{*}(x)^{2-p}I(x), 
\end{eqnarray*}
where we used the fact that $1<p<2$.    With $r=2/(2-p)$ and $r'=2/p$ so that $1<r, r'<\infty$ and $1/r+1/r'=1$, we can integrate both sides of this inequality and apply H\"older's inequality to obtain 
\begin{eqnarray*}
\int_{\Rd} \widetilde{G}(f)(x)^p dx &\leq & \int_{\R^d} {f^{*}(x)}^{\frac{p(2-p)}{2}}I(x)^{p/2}\,dx\\
&\leq &  \left(\int_{\R^d} {f^{*}(x)}^{p}dx\right)^{(2-p)/2}\left(\int_{\Rd}I(x)dx\right)^{p/2}\\
&\leq& \left(\frac{p}{p-1}\right)^{\frac{p(2-p)}{2}} C_p^{p/2}\left(\int_{\Rd}|f(x)|^p\,dx\right)^{(2-p)/2}\left(\int_{\Rd}|f(x)|^p\,dx\right)^{p/2}\\
&=&\left(\frac{p}{p-1}\right)^{\frac{p(2-p)}{2}} C_p^{p/2}\int_{\Rd}|f(x)|^p\,dx, 
\end{eqnarray*}
where in the last inequality we used Lemma \ref{stein} and the Hardy-Stein bound \eqref{HS-bound}.    This gives 
\begin{equation}
\|\widetilde{G}(f)\|_p\leq \left(\frac{p}{p-1}\right)^{\frac{(2-p)}{2}} C_p^{1/2}\|f\|_p, \qquad 1<p<2. 
\end{equation} 

In order to prove the left-hand side of \eqref{eq:1p2feGf}, we fix nonzero $f\in L^p(\Rd)$ and let $s>0$. Define $f_s:= P_sf$ and $g_s:=|f_s|^{(p-1)}\sgn f_s$. By ultracontractivity,
$f_s\in L^2(\Rd)$ and $g_s\in L^{\infty}(\Rd)$. 
Furthermore, for $q=p/(p-1)$ we have
$\|g_s\|_q=\|f_s\|^{p-1}_p$ and 
$$
\|f_s\|^{p}_p=\int_{\RR^d}f_s(x)g_s(x)dx.
$$
Let $\varphi_n:=\ind_{B(0,n)}g_s$. Since $g_s$ is bounded, $\varphi_n\in L^2(\Rd)$ for all $n\geq1$. By (\ref{eq:L2isometry}) and \eqref{eq:halfp}, 
\begin{align*}
&\int_{\RR^d}f_s(x)\varphi_n(x)dx=\frac14(\|f_s+\varphi_n\|^2_2-\|f_s-\varphi_n\|^2_2)=
\frac14(\|G(f_s+\varphi_n)\|^2_2-\|G(f_s-\varphi_n)\|^2_2)\\
=&\int_{\RR^d}\int_0^{\infty}\int_{\RR^d}(P_tf_s(x+y)-P_tf_s(x))(P_t\varphi_n(x+y)-P_t\varphi_n(x))\nu(dy)dtdx\\
=&2\int_{\RR^d}\int_0^{\infty}\int_{\left\{|P_tf_s(x)|>|P_tf_s(x+y)|\right\}}(P_tf_s(x+y)-P_tf_s(x))(P_t\varphi_n(x+y)-P_t\varphi_n(x))\nu(dy)dtdx\\
\leq &2\int_{\RR^d}\widetilde{G}(f_s)(x)G(\varphi_n)(x)dx\leq 2\|\widetilde{G}(f_s)\|_p\|G(\varphi_n)\|_q.
\end{align*}
In the last line we used the Cauchy-Schwarz inequality and H\"older's inequality. Finally, since $q>2$, by Lemma \ref{GfEstimate} we have $\|G(\varphi_n)\|_q\leq C\|\varphi_n\|_q$ and so
\begin{equation}\label{eq:Gpsi}
\int_{\RR^d}f_s(x)\varphi_n(x)dx\leq C\|\widetilde{G}(f_s)\|_p\|\varphi_n\|_q.
\end{equation}
By the monotone convergence, $\|\varphi_n\|_q\to\|g_s\|_q$ as $n\to\infty$, and the left-hand side of \eqref{eq:Gpsi} converges to $\|f_s\|^{p}_p$. This gives
$$
\|f_s\|^{p}_p\leq C\|\widetilde{G}(f_s)\|_p\|g_s\|_q=C\|\widetilde{G}(f_s)\|_p\|f_s\|^{p-1}_p.
$$
Dividing by $\|f_s\|^{p-1}_p$ we obtain $\|f_s\|_p\leq C\|\widetilde{G}(f_s)\|_p$. We let $s\to0$ in 
$$
\widetilde{G}(f_s)=\left(\int_s^{\infty}\int_{\left\{|P_tf(x)|>|P_tf(x+y)| \right\}}(P_tf(x+y)-P_tf(x))^2\nu(dy)dt\right)^{1/2}.
$$
The monotone convergence and strong continuity of $P_t$ in $L^p(\Rd)$ yield \eqref{eq:1p2feGf}.
\end{proof}

\begin{proof}[Proof of Theorem~\ref{thtildeGp}]

The result combines Lemma~\ref{tildeGp>2} and Lemma~\ref{lem:seGtpsmall}.
\end{proof}
It is well-known that the classical  Littlewood-Paley operator $G_{*}$ constructed from harmonic  functions is not 
bounded on $L^p$, if $1<p<2$.   An explicit example for this failure is presented in  \cite{MR797052}.  Inspired by \cite{MR797052} we show that the square operator $G$ also fails to be bounded on $L^p$, if $1<p<2$. Thus, $\tilde G$ and $G$ differ significantly.

\begin{example}\label{eq:incGG}
\rm
For $d\geq2$ and $x\in\Rd$ we let $h(x)=|x|^{-(d+1)/2}$ and $f(x)=h(x)\ind_{|x|\leq 1}$. We have that $f\in L^p(\Rd)$ for $1<p<2d/(d+1)$. Let $P_t$ be the rotationally invariant Cauchy (Poisson) semigroup on $\Rd$.  That is,  the semigroup of the $\alpha$-stable processes with $\alpha=1$ with transition density 
$$
p_t(x,y)=\cc_d\frac{t}{\left(t^2+|x-y|^2\right)^{\frac{d+1}{2}}},
$$
where $\cc_d=\Gamma((d+1)/2)\pi^{-(d+1)/2}$. Since $h$ is locally integrable on $\Rd$ and vanishes at infinity, the function 
$$
v(x,t):=\begin{cases}
		P_th(x), & \ x\in\Rd, \ t>0,\\
		h(x), & \ x\in\Rd, \ t=0,
		\end{cases}
$$ 
is well defined and continuous except at $(x,t)=(0,0)$. We see that $v$ is the classical harmonic extension of $h$ to the upper half-space in $\RR^{d+1}$.  
For $x\in\Rd$ and $s,t>0$ we let 
$$
v_s(x,t)=
\int_{B(0,1/s)}
{p_t(x,y)}h(y)dy.
$$
From scaling it follows that 
\begin{align*}
P_tf(x)&=\cc_d\int_{B(0,1)}\frac{t}{\left(t^2+|x-y|^2\right)^{\frac{d+1}{2}}}h(y)dy\\
&=t^{-(d+1)/2}\cc_d\int_{B(0,1)}\frac{1}{t^d\left(1+|x/t-y/t|^2\right)^{\frac{d+1}{2}}}\frac{1}{|y/t|^{\frac{d+1}{2}}}dy\\
&=t^{-(d+1)/2}v_t(x/t,1),
\end{align*}
and that 
$$
v(x,t)=t^{-(d+1)/2}v(x/t,1), \qquad x\in \Rd,\, t>0.
$$
We have
\begin{align}
G(f)(x)^2&=\mathcal{A}_{d,-1}\int_0^{\infty}\int_{\RR^d}\frac{(P_tf(y)-P_tf(x))^2}{|x-y|^{d+1}}dydt\nonumber\\	
&=\mathcal{A}_{d,-1}\int_0^{\infty}\int_{\RR^d}\frac{(v_t(y/t,1)-v_t(x/t,1))^2}{t^{d+1}|x-y|^{d+1}}dydt\nonumber\\	
&=\mathcal{A}_{d,-1}\int_0^{\infty}\int_{\RR^d}\frac{(v_t(z,1)-v_t(x/t,1))^2}{t|x-tz|^{d+1}}dzdt,\label{GCauchy}
\end{align}
where $\mathcal{A}_{d,-1}$ is the constant in \eqref{eq:stableconstant}. Observe that $v_t(z,1)\nearrow v(z,1)>0$,  for all $z\in\Rd$, as $t\searrow 0$. Furthermore, 
$$
v_t(x/t,1)\leq v(x/t,1)=t^{\frac{d+1}{2}}v(x,t)
\to 0  \quad \mbox{for } t\searrow 0,\, 
x\neq0.
$$
Applying Fubini's theorem in (\ref{GCauchy}) we see that $G(f)\equiv\infty$.
On the other hand, $\tilde G(f)\in L^p$ for every $1<p<2d/(d+1)$, as follows from Theorem~\ref{thtildeGp}.
\hfill $\qed$
\end{example}

\section{Application to L\'evy multipliers}

Among the many applications of classical square functions are those to Fourier multipliers. Accordingly, in this section we prove $L^p$ boundedness for a class of Fourier multipliers that arise in connection to L\'evy processes.  The multipliers were first studied  in \cite{MR2345912} and subsequently in \cite{MR2918086} and \cite{MR3263924} where explicit $L^p$  bounds were proved by using
Burkholder's sharp inequalities for martingale transforms.
These multipliers include the differences of second order Riesz transforms, $R_1^2-R_2^2$, for which the
bounds given in \cite{MR2918086} and \cite{MR3263924}
were already known to be best possible.  Below we derive $L^p$ boundedness of the operators in a different 
way by using our square function inequalities and the  representation of Fourier multipliers from \cite{MR3263924}.

As previously, we consider a symmetric pure-jump L\'evy process $\{X_t, t\geq0\}$ on $\Rd$ with the semigroup $(P_t)$ and (symmetric) L\'evy measure $\nu$ satisfying \eqref{e:HaWi}.
Recall from \eqref{Lexponent} that 
the  L\'evy-Khintchine exponent is 
$$
\psi(\xi)=\int_\Rd \left(1-\cos(\xi\cdot x)\right)\nu(dx),\quad \xi\in \Rd.
$$
Let $\phi(t,y)$ be a bounded 
function on $(0,\infty)\times\Rd$. Let
$1<p,q<\infty$ and $\frac{1}{p}+\frac{1}{q}=1$.
For $f\in L^p(\RR^d)\cap L^2(\RR^d)$ and $h\in L^q(\RR^d)\cap L^2(\RR^d)$, 
we consider
\begin{eqnarray}\label{eq:pairing}
\Lambda(f,h)&=&\int_{0}^{\infty}\int_{\RR^d}\int_{\RR^d}[P_{t}f(x+y)
- P_{t}f(x)]
[P_{t}h(x+y)
- P_{t}h(x)]
 \phi(t,y)\nu(dy)dx dt.
\end{eqnarray}
Although not needed for our argument here, it should be pointed out that this quantity arises in \cite{MR3263924} from the It\^o isometry after taking inner products of the martingale transform of $f$ by the function $\phi$ and the martingale corresponding to $h$ (see \cite[Theorem 3.4]{MR3263924} for more details on this pairing).  Here we just observe that the integral is absolutely convergent, by \eqref{eq:L2isometry} and Cauchy-Schwarz inequality. By \eqref{eq:halfp} and Cauchy-Schwarz,
\begin{eqnarray*}
|\Lambda(f,h)|&\le&\|\phi\|_{\infty}
\int_{0}^{\infty}\int_{\RR^d}\int_{\RR^d}|P_{t}f(x+y)
- P_{t}f(x)|\ 
|P_{t}h(x+y)
- P_{t}h(x)|
\nu(dy)\,dx dt\\
 &= &2\|\phi\|_{\infty}\int_{\Rd}\int_0^{\infty} \int_{\left\{|P_tf(x)|>|P_tf(x+y)|\right\}} |P_{t}f(x+y)- P_{t}f(x)||P_{t}h(x+y)- P_{t}h(x)|
\nu(dy)dtdx\\
 &\leq & 2\|\phi\|_{\infty} \int_{\RR^d} \widetilde{G}(f)(x) G(h)(x)\,dx. 
\end{eqnarray*}
Assuming $1<p\le 2$, we have $2\le q<\infty$, and by H\"older inequality and Theorem~\ref{thtildeGp} we get
\begin{eqnarray*}
|\Lambda(f,h)| &\leq &2\|\phi\|_{\infty} \|\widetilde{G}(f)\|_p\|G(h)\|_q
\leq C_p\|\phi\|_{\infty}\|f\|_p\|h\|_q.   
\end{eqnarray*}
If $2<p<\infty$, then  $1< q<2$, and we similarly have 
\begin{eqnarray*}
|\Lambda(f,h)| &\leq &2\|\phi\|_{\infty} \|G(f)\|_p\|\widetilde G(h)\|_q
\leq C_p\|\phi\|_{\infty}\|f\|_p\|h\|_q.   
\end{eqnarray*}
By the Riesz representation theorem, there is a unique linear operator $S_{\phi}$ on $L^p(\RR^d)$ such that $\Lambda(f,g)=(S_{\phi}f,g)$, and $\|S_\phi\|\le C_p \|\phi\|_{\infty}$.

The computation of the symbol of the multiplier is now exactly as in \cite[p.1134]{MR3263924} where it is done for arbitrary L\'evy measures.  In our case, Plancherel's identity yields  
\begin{eqnarray}
\Lambda(f,h)&=&(2\pi)^{-d}\int_{\RR^d}\left\{\int_{\RR^d}\int_0^{\infty}e^{-2t{\psi}(\xi)}|e^{-i\xi\cdot y}-1|^2 \phi(t, y)dt \nu(dy)  \right\} \hat{f}(\xi)\overline{\hat{h}}(\xi)d\xi\\
&=&(2\pi)^{-d}\int_{\RR^d}\left\{2\int_{\RR^d}\int_0^{\infty}e^{-2t{\psi}(\xi)}\left(1-\cos(\xi\cdot y)\right)\phi(t, y)dt \nu(dy)   \right\} \hat{f}(\xi)\overline{\hat{h}}(\xi)d\xi\nonumber\\
&=&(2\pi)^{-d}\int_{\RR^d} m(\xi)\hat{f}(\xi)\overline{\hat{h}}(\xi)d\xi,\nonumber 
\end{eqnarray}
where
\begin{equation}\label{multiplier-2}
m(\xi)=2\int_{\RR^d}\left(1-\cos(\xi\cdot y)\right)\left(\int_0^{\infty}e^{-2t{\psi}(\xi)}\phi(t, y)dt \right)\nu(dy). 
\end{equation}
Thus, $S_{\phi}$ is an $L^p$--Fourier multiplier with $\widehat{S_{\phi}f}(\xi)=m(\xi)\hat{f}(\xi)$, $f\in L^2\cap L^p$, and  $\|m\|_{\infty} \leq \|\phi\|_{\infty}$.  
If $\phi$ is independent of $t$, then  we further get
\begin{eqnarray*}
m(\xi)=\frac{\int_{\RR^d}\left(1- \cos{(\xi \cdot y)}\right)\phi(y) \nu(dy)}{
\int_{\RR^d}\left(1- \cos{(\xi \cdot y)}\right) \nu(dy)},
\end{eqnarray*}
the symbols of \cite{MR2918086}.
Typical examples obtained in this way
are the Marcinkiewicz multipliers \cite{MR2345912} given by 
$$m(\xi_1,\ldots,\xi_d)=\frac{|\xi_j|^{\alpha}}{|\xi_1|^{\alpha}+\ldots+|\xi_d|^{\alpha}},$$
where $0<\alpha<2$ and $j=1, \dots, d$.

Taking $\phi\equiv 1$, the above calculations give 
\begin{corollary}\label{dual} If $f\in L^p(\Rd)\cap L^2(\RR^d)$, $1<p\leq 2$, $h\in L^q(\RR^d)\cap L^2(\RR^d)$, $q=\frac{p}{p-1}$, then 
\begin{equation}
|\int_{\RR^d}f(x)h(x)dx|\leq 2\int_{\RR^d} \widetilde{G}(f)G(h) dx\leq 2\|\widetilde{G}(f)\|_p\|G(h)\|_q. 
\end{equation}
Similarly,  if $f\in L^p(\Rd)\cap L^2(\RR^d)$, $2<p\leq \infty$, $h\in L^q(\RR^d)\cap L^2(\RR^d)$ and $q=\frac{p}{p-1}$, then 
\begin{equation}
|\int_{\RR^d}f(x)h(x)dx|\leq 2\int_{\RR^d} \widetilde{G}(f)G(h) dx\leq 2\|{G}(f)\|_p\|\widetilde{G}(h)\|_q, 
\end{equation}

\end{corollary}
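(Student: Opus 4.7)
The plan is to observe that \textbf{Corollary \ref{dual}} is exactly what one obtains from the $\Lambda(f,h)$ analysis preceding it by specializing to the constant symbol $\phi \equiv 1$. All the real work has been carried out already; it remains only to identify the pairing and invoke the earlier estimates.

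First, I would set $\phi \equiv 1$ in the definition \eqref{eq:pairing} of $\Lambda(f,h)$. Comparing with the polarization identity \eqref{eq:pol}, we get
\begin{equation*}
\Lambda(f,h) = \int_{\R^d} f(x)\,h(x)\,dx = \langle f, h\rangle,
\end{equation*}
which is finite because $f,h \in L^2(\R^d)$. Hence $\int f h\,dx$ inherits all the upper bounds already derived for $|\Lambda(f,h)|$ in the preceding paragraph.

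Next, for the first assertion ($1<p\le 2$) I would reproduce the bound $|\Lambda(f,h)| \le 2\|\phi\|_\infty \int_{\R^d}\widetilde G(f)\,G(h)\,dx$ that was obtained by using \eqref{eq:halfp} to cut the $(x,y)$-integral over the set $\{|P_tf(x)|>|P_tf(x+y)|\}$ at the cost of a factor of $2$, followed by a pointwise Cauchy--Schwarz in $(t,y)$-space: asymmetrically applied, this introduces $\widetilde G(f)(x)$ for $f$ and the symmetric $G(h)(x)$ for $h$. Since $\|\phi\|_\infty = 1$ this yields the first inequality in the corollary. Then I would apply H\"older's inequality with exponents $p$ and $q$ to the $x$-integral, which yields the second inequality.

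For the second assertion ($2<p\le\infty$), the roles of $f$ and $h$ are reversed: I would apply the same cutting identity \eqref{eq:halfp} but this time placing the indicator on the $h$-factor instead of the $f$-factor, which swaps which function carries $\widetilde G$ versus $G$. H\"older's inequality with the same exponents then produces $\|G(f)\|_p \|\widetilde G(h)\|_q$. The only mild subtlety is the endpoint $p=\infty$, where $q=1$ and Theorem~\ref{thtildeGp} is not directly stated; however, for the inner inequality $|\int f h\,dx| \le 2\int \widetilde G(f)G(h)\,dx$ no $L^p$--boundedness of the square functions is invoked, only the pointwise/integral estimate, so the statement extends unambiguously. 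The main (and only) point that would merit care is checking that the polarization step really is licit for $f \in L^p\cap L^2$ and $h\in L^q\cap L^2$ with indices as above; this is immediate from $f,h\in L^2$ and \eqref{eq:pol}, which is the reason $L^2$ membership is assumed in the hypotheses.
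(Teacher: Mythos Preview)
Your proposal is correct and matches the paper's approach exactly: the corollary is stated immediately after the sentence ``Taking $\phi\equiv 1$, the above calculations give,'' so the proof is precisely the specialization you describe, identifying $\Lambda(f,h)=\langle f,h\rangle$ via \eqref{eq:pol} and then reading off the bounds already established for $|\Lambda(f,h)|$. Your observation that the second assertion requires placing the indicator on the $h$-factor (so that the intermediate integrand is really $G(f)\widetilde G(h)$ rather than $\widetilde G(f)G(h)$ as printed) is also correct and is what ``similarly'' means in the paper's derivation just above.
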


{\bf Acknowledgements.}

Krzysztof Bogdan thanks the 
Department of Statistics of Stanford University for its hospitality during 
this work. We thank Fabrice Baudoin and Krzysztof Michalik for 
discussions on square functions and Jacek Zienkiewicz for a conversation on the proof 
of the Hardy-Stein identity.  We thank Elias Stein for discussions on the results of the paper and on the history of the subject.

\end{document}